\documentclass[11pt]{amsart}

\usepackage{amsmath,amssymb,amsthm}
\usepackage[colorlinks=true,linkcolor=blue,citecolor=blue,urlcolor=blue]{hyperref}
\theoremstyle{plain}
\newtheorem{theorem}{Theorem}[section]
\newtheorem{lemma}[theorem]{Lemma}
\newtheorem{corollary}[theorem]{Corollary}

\theoremstyle{definition}
\newtheorem{definition}[theorem]{Definition}
\newtheorem{example}[theorem]{Example}

\theoremstyle{remark}

\numberwithin{equation}{section}

\newcommand{\N}{\mathbb N}
\newcommand{\Nzero}{\mathbb N_0}
\newcommand{\R}{\mathbb R}
\newcommand{\Pp}{\mathbb P}
\newcommand{\E}{\mathbb E}
\newcommand{\dd}{\stackrel{d}{=}}
\newcommand{\norm}[1]{\left\lVert #1\right\rVert}

\begin{document}

\title[Almost periodicity as a path property for \texorpdfstring{$p$-adic}{} sssi processes]
{Almost periodicity as a path property for \texorpdfstring{\protect\(p\protect\)-adic}{} self-similar processes with stationary increments}

\author{Yi Shen}

\author{Zhenyuan Zhang}

\date{\today}

\subjclass[2020]{Primary 60G18; Secondary 60G10, 43A60}
\keywords{\(p\)-adic self-similarity, stationary increments,
almost periodicity, random fields, limit periodicity}

\begin{abstract}
Shen and Zhang (2021) showed that almost periodicity naturally arises in the
spectral representation of discrete-time \(p\)-adic self-similar processes with
stationary increments. In this paper, we study several notions of almost
periodicity as sample path properties of Banach space-valued \(p\)-adic sssi
processes. We prove that Bohr almost periodicity is equivalent, as a path
event, to continuity with respect to the \(p\)-adic topology. We also show that
the corresponding equivalence fails for Weyl and Besicovitch almost
periodicity. Finally, we extend the Bohr almost-periodic result to
finite-dimensional random fields.
\end{abstract}

\maketitle

\section{Introduction}

Self-similar processes with stationary increments, or sssi processes, are
random processes whose laws are controlled by two basic symmetries: dilating
time and translating the time origin for increments.  In the usual continuous-time setting,
self-similarity says that a dilation of time changes the law only by a
deterministic power factor, while stationary increments say that the law of an
increment depends on its length but not on its base point.  These symmetries
underlie standard examples such as fractional Brownian motion, stable L\'{e}vy processes,
and many long-range dependent models~\cite{EmbrechtsMaejima2002,SamorodnitskyTaqqu1994,PipirasTaqqu2017}.  In the discrete-time setting studied by \cite{GefferthEtAl2003,ShenZhang2021}, the scaling factor need not be governed
by the ordinary absolute value of an integer.  It may instead be governed by
the \(p\)-adic norm: dilating time by a nonzero integer \(a\) changes the law
by the factor \(|a|_p^H\) for some $H>0$.  This gives a natural class of
\textit{\(p\)-adic sssi processes}, whose sample paths are sequences indexed by
\(\mathbb N_0\) but whose scaling symmetry follows the \(p\)-adic topology.\footnote{In \cite{ShenZhang2021}, $p$-adic sssi processes are called discrete-time sssi processes of type II.}

A distinguishing feature of \(p\)-adic sssi processes in $L^2$ is that they are mean-square Bohr almost periodic.\footnote{Recall that a stochastic process $(X_n)_{n\in \Nzero}$ in $L^2$ is mean-square Bohr almost periodic, if for
every \(\varepsilon>0\), there exists \(L\in\mathbb N\) such that every interval
of \(L\) consecutive positive integers contains some translation number \(\tau\) satisfying 
   $\sup_{n\ge0}\mathbb E[|X_{n+\tau}-X_n|]^2<\varepsilon$.}  Leveraging the mean-square almost periodicity, \cite{ShenZhang2021} proved that \(p\)-adic sssi processes in $L^2$ admit a Fourier-type
spectral representation. This is similar to the standard spectral representation of stationary processes, but the difference is that, here, only \(p\)-adic rational
frequencies \(\ell/p^m\) appear and the Fourier coefficients satisfy distributional relations coming from the probabilistic symmetries.

In this paper, we study almost periodicity as a sample path property of
\(p\)-adic sssi processes. The  general notions of almost periodicity refer to relative denseness of translation numbers $\tau$ under which the function changes little, either uniformly or in average senses. A central question is, for different versions $\Xi$ of almost periodicity, 
\begin{center}
    \textit{is $\Xi$-almost periodicity equivalent to continuity w.r.t.~the $p$-adic topology\\ as sample path properties, for $p$-adic sssi processes?}
\end{center}
Here, $\Xi$-almost periodicity may refer to almost periodicity in the Bohr, Weyl, or Besicovitch sense \cite{Bohr1925I,Weyl1927,Besicovitch1926}, as well as extensions such as limit periodicity and asymptotic almost periodicity; see Definition~\ref{def} and Figure \ref{fig:deterministic-hierarchy} below. 

Our main results indicate that the answer is mixed. We show that \(p\)-adic continuity, Bohr almost periodicity, limit periodicity, and asymptotic almost periodicity are equivalent as sample path properties for \(p\)-adic sssi processes (Theorems \ref{thm:main1} and \ref{thm:asymp p}, and Corollary \ref{coro:limit p}). However, surprisingly, the same does not hold for Weyl almost periodicity or Besicovitch almost periodicity (Theorem \ref{thm:WB p}).

The easy direction of the equivalence is deterministic: if \(f\) is uniformly \(p\)-adically
continuous, then every integer with a sufficiently small $p$-adic norm is a valid translation number, so
\(f\) satisfies those almost periodic conditions.  The other direction is harder and
uses the probabilistic symmetries.  
The intuition is that the \(p\)-adic scaling symmetry strongly biases good
translation numbers toward high \(p\)-adic divisibility. This is because the \(p\)-adic sssi property
suggests that increments along sublattices \(r+p^K\mathbb N_0\) have the same law as the original process after scaling by $p^{-KH}$, and hence are stochastically small. On the other hand, Weyl and Besicovitch almost periodicity control translates only
in averaged seminorms, so rare large increments can be invisible to the
almost-periodic condition while still destroying \(p\)-adic continuity, which
requires uniform control over all indices. \\

\textbf{Related literature.}
We now place these results in context. First, we use classical facts from the theory of almost-periodic functions and
sequences; standard references include
\cite{AmerioProuse1971,Corduneanu1989,LevitanZhikov1982},
and broader hierarchies of almost-periodic spaces are surveyed in
\cite{Grande2006}. 

Second, the probabilistic background traces back to Lamperti's
formulation of self-similarity~\cite{Lamperti1962}.  On the real line,
sample path properties of sssi processes were systematically studied by~\cite{Vervaat1985}, with stable sssi examples treated in
\cite{KonoMaejima1991,SamorodnitskyTaqqu1994}.  For \(p\)-adic sssi
processes,~\cite{ShenZhang2021} provided classification
results, tree constructions, and an \(L^2\) spectral representation.
 Our result complements the sample path viewpoint of~\cite{Vervaat1985}, but with a different emphasis: instead of assuming a
regular path class in advance, we show that the \(p\)-adic sssi symmetries
upgrade the qualitative assumption of Bohr almost periodicity to uniform
\(p\)-adic continuity.

Third, there is also a separate literature on almost-periodic stochastic processes.
One line, closer in spirit to path questions, studies sufficient conditions for
sample paths of stationary processes to be almost periodic in
Besicovitch, Stepanov, or uniform senses
\cite{Slutsky1938,Udagawa1952,Kawata1982}.  Another line studies
almost periodicity in distribution, in square mean, or in \(p\)-th mean, often
as a regularity condition for solutions of stochastic difference, differential,
and evolution equations
\cite{DaPratoTudor1995,BezandryDiagana2011}.  Our work
is different from both lines: almost periodicity is imposed directly on
individual sample paths, and the goal is to identify the deterministic path
class forced by the $p$-adic sssi symmetries.\\

\textbf{Paper outline.} 
The rest of this paper is organized as follows.  Section~\ref{sec:conventions} discusses in detail the
notions of almost periodicity and continuity and records the deterministic
hierarchy used later.  Section~\ref{sec:one-dimensional} proves the
equivalence theorem for Bohr almost periodicity and limit periodicity.  Section~\ref{sec:variants} treats the
asymptotically almost-periodic variant.  Section~\ref{sec:counterexamples}
constructs heavy-tailed examples of $p$-adic sssi processes, showing that the equivalence theorem does not hold for Weyl and
Besicovitch almost periodicity.  Section~\ref{sec:multi}
gives an extension of the equivalence theorem to random fields.

\section{Basic Setting and Definitions}\label{sec:conventions}

Throughout, \(p\) is a fixed prime number and \(H>0\). Let $B$ be a separable Banach space (and hence a Polish space).  Following \cite{ShenZhang2021}, a \(B\)-valued
\emph{\(p\)-adic \(H\)-sssi process} is a process
\(X=(X_n)_{n\in\Nzero}\) such that
\begin{equation}\label{eq:sssi-1d}
   (X_{a n})_{n\in\Nzero}\dd |a|_p^H(X_n)_{n\in\Nzero},
   \qquad
   (X_{n+\ell}-X_\ell)_{n\in\Nzero}\dd (X_n-X_0)_{n\in\Nzero}
\end{equation}
for all \(a\in\N\) and \(\ell\in\Nzero\), where $\N$ and $\Nzero$ are the sets of positive and non-negative integers, respectively, and $|\cdot|_p$ denotes the $p$-adic norm.

By \cite[Proposition 3.11(i)]{ShenZhang2021}, $X_0=0$ a.s.
It is then easy to check by definition that any \(p\)-adic \(H\)-sssi process is stochastically continuous with respect to the $p$-adic topology on $\Nzero$. Therefore, continuity with respect to the $p$-adic topology becomes a natural sample path property for such processes. For a deterministic function $f:\N_0\to B$, we say $f$ is \emph{(uniformly) $p$-adically continuous} if $f$ is uniformly continuous with respect to the $p$-adic topology, i.e.,
\begin{equation}\label{eq:padic-cont-1d}
   \forall\varepsilon>0,\ \exists K,\quad
   \sup_{n,u\geq0}\norm{f(n+p^K u)-f(n)}<\varepsilon.
\end{equation}


Next, we collect the almost-periodicity notions used throughout the paper. Recall that a set
\(T\subset\N\) is \emph{relatively dense} if there is \(L\in\N\) such that every
interval of \(L\) consecutive positive integers contains an element of \(T\).
\begin{definition}\label{def}
Let \(B\) be a Banach space and let \(f:\Nzero\to B\).  

\begin{enumerate}
\item A positive integer \(\tau\) is a
\emph{Bohr \(\varepsilon\)-translation number}, or simply an \emph{\(\varepsilon\)-translation number} of \(f\), if
\[
   \sup_{n\ge0}\norm{f(n+\tau)-f(n)}<\varepsilon .
\]
The sequence \(f\) is \emph{(Bohr) almost periodic}, if for every
\(\varepsilon>0\), the set of Bohr \(\varepsilon\)-translation numbers is
relatively dense.


\item The sequence \(f:\Nzero\to B\) is \emph{limit periodic} if
it is a uniform limit of periodic sequences, equivalently if for
every \(\varepsilon>0\) there is a periodic sequence \(g:\Nzero\to B\) such
that
\[
   \sup_{n\ge0}\norm{f(n)-g(n)}<\varepsilon .
\]

\item The sequence \(f:\Nzero\to B\) is
\emph{asymptotically almost periodic} if
\[
   f(n)=g(n)+c(n),\qquad n\ge0,
\]
where \(g:\Nzero\to B\) is almost periodic and \(c(n)\to0\) in \(B\).

\item For \(q\ge1\), define the \emph{Weyl \(q\)-seminorm} of a sequence
\(u:\Nzero\to B\) by
\[
   \norm{u}_{W^q}
   :=
   \limsup_{L\to\infty}\sup_{N\ge0}
   \left(\frac1L\sum_{n=N}^{N+L-1}\norm{u(n)}^q\right)^{1/q}.
\]
A positive integer \(\tau\) is a
\emph{Weyl \(q\)-\(\varepsilon\)-translation number} of \(f\) if
\[
   \norm{\,f(\cdot+\tau)-f(\cdot)\,}_{W^q}<\varepsilon .
\]
The sequence \(f\) is \emph{Weyl \(q\)-almost periodic} if, for every
\(\varepsilon>0\), the set of Weyl \(q\)-\(\varepsilon\)-translation numbers is
relatively dense.

\item For \(q\ge1\), define the \emph{Besicovitch \(q\)-seminorm} by
\[
   \norm{u}_{B^q}
   :=
   \limsup_{L\to\infty}
   \left(\frac1L\sum_{n=0}^{L-1}\norm{u(n)}^q\right)^{1/q}.
\]
A positive integer \(\tau\) is a
\emph{Besicovitch \(q\)-\(\varepsilon\)-translation number} of \(f\) if
\[
   \norm{\,f(\cdot+\tau)-f(\cdot)\,}_{B^q}<\varepsilon .
\]
The sequence \(f\) is \emph{Besicovitch \(q\)-almost periodic} if, for every
\(\varepsilon>0\), the set of Besicovitch \(q\)-\(\varepsilon\)-translation
numbers is relatively dense.
\end{enumerate}
\end{definition}

The deterministic notions of almost periodicity above are related as shown in
Figure~\ref{fig:deterministic-hierarchy}. First, if \(f\) is \(p\)-adically continuous, let \(r_K(n)\) be the least residue of \(n\) modulo \(p^K\) and put \(f_K(n)=f(r_K(n))\).  Then
 \(f_K\) is \(p^K\)-periodic and \(f_K\to f\) uniformly.  Hence continuity
 implies limit periodicity. The remaining arrows are standard parts of the deterministic hierarchy; see the
survey~\cite{Grande2006}. 

\begin{figure}[t]
\[
\begin{array}{c}
\text{\(p\)-adically continuous}
\\[0.6em]
\Downarrow
\\[0.6em]
\text{limit periodic}
\\[0.6em]
\Downarrow
\\[0.6em]
\text{Bohr a.p.}
\\[0.6em]
\Downarrow
\\[0.6em]
\text{asymptotically a.p.}
\\[0.6em]
\Downarrow
\\[0.6em]
\text{Weyl \(q\)-a.p.}
\\[0.6em]
\Downarrow
\\[0.6em]
\text{Besicovitch \(q\)-a.p.}
\end{array}
\]
\caption{Deterministic implications among the almost-periodicity (a.p.) and
continuity notions used in the paper, for fixed \(q\geq1\).  The reverse implications are false in general.}
\label{fig:deterministic-hierarchy}
\end{figure}

In this paper, we study $p$-adic continuity and various notions of almost periodicity as sample path properties. It is clear from the definitions that the path events are all measurable.

We end this section with a simple example that shows that deterministically, limit periodicity is indeed weaker than $p$-adic continuity, hence all the other almost periodicities mentioned above are also weaker than $p$-adic continuity.

\begin{example}
Let \(p'\ge 2\) be coprime to \(p\), and define \(f(n)=\mathbf 1_{\{p'\mid n\}}\) for \( n\in\mathbb N_0\). Then \(f\) is \(p'\)-periodic, and hence limit periodic. However, for every \(K\ge 0\), we have
\[
\sup_{n,u\ge 0}|f(n+p^K u)-f(n)|
\ge |f(p^K)-f(0)|=1.
\]
Thus,  \(f\) is not \(p\)-adically continuous.
\end{example}

\section{One-dimensional equivalence of \texorpdfstring{$p$-adic}{} continuity and almost periodicity}\label{sec:one-dimensional}

The goal of this section is to prove Theorem \ref{thm:main1} below, which asserts the equivalence of $p$-adic continuity and almost periodicity under $\Pp$ for $p$-adic sssi processes. We start from two elementary lemmas on the behavior of almost
periodic sequences under translations and scaling.

\begin{lemma}\label{lem:finite1}
Let \(B\) be a Banach space and let \(f:\Nzero\to B\) be almost
periodic.  For every \(\eta>0\) there is \(L=L(f,\eta)\) such that, for
every \(h\geq0\), some \(r\in\{0,\dots,L\}\) satisfies
\begin{equation*}
   \sup_{m\geq0}
   \norm{(f(m+h)-f(h))-(f(m+r)-f(r))}<\eta .
\end{equation*}
Consequently, for every \(a\geq0\),
\[
   \sup_{h\geq0}\norm{f(h+a)-f(h)}
   \leq \eta+\max_{0\leq r\leq L}\norm{f(r+a)-f(r)} .
\]
\end{lemma}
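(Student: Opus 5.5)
The plan is to use the relative density of Bohr translation numbers at tolerance \(\eta/2\) to move an arbitrary base point \(h\) into a bounded window. Fix \(\eta>0\) and let \(L_0\) be such that every block of \(L_0\) consecutive positive integers contains an \((\eta/2)\)-translation number of \(f\). We will show that \(L:=L_0\) works.

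Given \(h\ge0\), the first task is to find a translation number \(\tau\) with \(h-L\le\tau\le h\) (when \(h\ge L\)); the case \(h<L\) is trivial with \(r=h\). The natural tool is a backward shift. Let \(T\) denote the set of \((\eta/2)\)-translation numbers, and choose \(\tau\in T\cap\{h-L_0+1,\dots,h\}\), which is possible when \(h\ge L_0\). Put \(r:=h-\tau\in\{0,\dots,L_0-1\}\).

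Now the translation property gives, for every \(m\ge0\),
\[
\norm{f(m+h)-f(m+r)}=\norm{f((m+r)+\tau)-f(m+r)}<\eta/2,
\]
and the case \(m=0\) gives \(\norm{f(h)-f(r)}<\eta/2\). The triangle inequality then yields
\[
\norm{(f(m+h)-f(h))-(f(m+r)-f(r))}<\eta
\]
uniformly in \(m\). This is why the tolerance is taken to be \(\eta/2\), and there is no real obstacle beyond choosing the window on the correct side of \(h\). That choice matters because we need \(r\ge0\), so \(\tau\le h\) is required. A slight subtlety is that the sup is over \(m\) with a strict inequality. Since the bound at each \(m\) is \(\le\) a uniform quantity \(<\eta/2\) (the sup defining a translation number is itself strictly less than \(\eta/2\)), the sup of the sum is at most \(2\sup<\eta\), so strictness is preserved.

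For the consequence, fix \(a\ge0\) and apply the first part with \(m=a\). This gives
\[
\norm{f(h+a)-f(h)}\le\eta+\norm{f(r+a)-f(r)}\le\eta+\max_{0\le r\le L}\norm{f(r+a)-f(r)}.
\]
Taking the supremum over \(h\) completes the argument.
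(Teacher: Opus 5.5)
Your proposal is correct and follows the paper's proof: you take \(L\) from the relative density of \(\eta/2\)-translation numbers, shift \(h\) back by a translation number \(\tau\le h\) to get \(r=h-\tau\in\{0,\dots,L\}\), apply the triangle inequality, and then set \(m=a\) for the consequence. Your remark that the bound is \(2\sup_n\norm{f(n+\tau)-f(n)}<\eta\), so the strict inequality survives the supremum over \(m\), makes explicit a point the paper leaves implicit.
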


\begin{proof}
By the definition of almost periodicity, choose \(L\) so that every interval of \(L\) consecutive positive
integers contains an \(\eta/2\)-translation number of \(f\).  If
\(h\leq L\), take \(r=h\).  If \(h>L\), choose such a translation number
\(\tau\in[h-L,h]\) and put \(r=h-\tau\).  Then \(0\leq r\leq L\), and
for all \(m\geq0\), $\norm{f(m+h)-f(m+r)}<\eta/2$ and $\norm{f(h)-f(r)}<\eta/2$. 
The first assertion then follows from the triangle inequality, and the second is obtained by letting \(m=a\).
\end{proof}

\begin{lemma}\label{lem:arith1}
Let \(B\) be a Banach space and let \(f:\Nzero\to B\) be almost
periodic.  Then for every \(r\geq0\) and \(q\geq1\),
\[
   u\mapsto f(r+qu)-f(r),\qquad u\geq0,
\]
is almost periodic.
\end{lemma}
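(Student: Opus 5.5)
Subtracting the constant \(f(r)\) does not affect almost periodicity, because translation differences \(g(u+\tau)-g(u)\) ignore additive constants. So it suffices to show that \(g(u):=f(r+qu)\) is almost periodic. The plan is to take translation numbers of \(f\) that are divisible by \(q\), and to show that these still form a relatively dense set; dividing them by \(q\) will then give the translation numbers of \(g\).

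\textbf{Step 1 (multiples of \(q\) among translation numbers).} Fix \(\varepsilon>0\). By the standard fact that a Banach-valued almost periodic sequence has relatively compact range in the translate sense, the hull \(\{f(\cdot+k):k\ge0\}\) is totally bounded in the sup norm. We do not want to import that fact, so we argue directly with a pigeonhole on residues. Let \(T_\delta\) be the set of \(\delta\)-translation numbers of \(f\), with \(\delta=\varepsilon/q\). By almost periodicity, \(T_\delta\) is relatively dense with some length \(L\). Among any \(q+1\) elements \(\tau_0<\dots<\tau_q\) of \(T_\delta\), two of the partial sums share a residue mod \(q\). Concretely, take a chain of translation numbers \(\tau_1,\dots,\tau_q\in T_\delta\) and form the sums \(s_j=\tau_1+\dots+\tau_j\) for \(j=0,\dots,q\). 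By pigeonhole there are \(i<j\) with \(s_i\equiv s_j \pmod q\), so \(\sigma:=s_j-s_i=\tau_{i+1}+\dots+\tau_j\) is a positive multiple of \(q\). The triangle inequality gives \(\sup_n\norm{f(n+\sigma)-f(n)}<(j-i)\delta\le q\delta=\varepsilon\).

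\textbf{Step 2 (relative density).} We need such \(\sigma\) in every window of bounded length. Given a target position \(M\), we choose \(\tau_1\in T_\delta\cap[M,M+L]\) and \(\tau_2,\dots,\tau_q\in T_\delta\cap[1,L]\). Each of these intervals of length \(L\) contains an element of \(T_\delta\). The resulting \(\sigma\) then lies in \([M-? ,M+qL]\). The catch is that if \(i\ge1\), the block omits \(\tau_1\) and \(\sigma\) may be small, at most \((q-1)L\). This is the main obstacle. The fix is to first produce a single \(\varepsilon/2\)-translation number \(\sigma_0\equiv0 \pmod q\) with \(\sigma_0\le qL\); the argument above does this using only the \(\tau\)'s in \([1,L]\). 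Then, for a window \([M,M+L']\), pigeonhole on \(\tau\in T_{\varepsilon/(2q)}\cap[M,M+L]\) together with small ones, i.e.\ consider \(\tau+s_j\). A cleaner route is as follows. Let \(\rho\) be the residue of \(\tau\) mod \(q\). Earlier, applying the pigeonhole to the sums \(s_j\) built from small translation numbers, we can realize every residue class that is attained. Instead we pick \(q\) elements \(\tau_1\in[M,M+L]\) and \(\tau_2,\dots,\tau_{q+1}\in[1,L]\), all in \(T_{\varepsilon/(q+1)}\), and apply the pigeonhole to the \(q+1\) partial sums starting \emph{after} \(\tau_1\), but including \(\tau_1\) in the comparison. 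That is, we look at \(\tau_1, \tau_1+\tau_2,\dots\), modulo \(q\), together with \(0\). If the matching pair involves \(0\), then \(\sigma\ge\tau_1\ge M\). Otherwise we obtain a small multiple \(\sigma'\le qL\), and we add \(\sigma'\) repeatedly to a large \(\tau_1\). This does not immediately give divisibility, so the cleaner formulation is the following. Consider \(q\) residues of the numbers \(\tau_1+\tau_2+\dots+\tau_k\) for \(k=1..q\) together with the need for residue \(0\). If none is \(0\), two coincide, which yields a small multiple. So I would instead use the Bochner-type argument: the family \(\{(f(\cdot+k))_{k}\}\) is totally bounded (derived from Lemma \ref{lem:finite1}: every translate is within \(\eta\) of one of the \(L+1\) translates \(f(\cdot+r)\), up to the constant, which is handled by also tracking \(f(h)\)). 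Applying this to the \(B^q\)-valued almost periodic sequence \(F(n)=(f(n),f(n+1),\dots,f(n+q-1))\) is one option. More simply, we cover \(\{k\}\) by finitely many classes \(C_{r,\rho}=\{k\equiv\rho \pmod q:\ \sup_m\norm{f(m+k)-f(m+r)}<\varepsilon/2\}\) with \(r\le L\). For \(k\in[M,M+L']\) we take \(k\equiv r\pmod q\) in the same class as some \(r\). This requires finitely many \(r\), and we get \(\sigma=k-r\), a multiple of \(q\) and an \(\varepsilon\)-translation number, with \(\sigma\in[M-L,M+L']\). Existence of such \(k\) in each long window follows from the pigeonhole over the \(q(L+1)\) classes applied to the \(q(L+1)+1\) integers \(k\) in the window lying in classes with \(r\equiv k\).

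\textbf{Step 3.} With \(\sigma=q\sigma_1\), we get \(\sup_u\norm{g(u+\sigma_1)-g(u)}\le\sup_n\norm{f(n+\sigma)-f(n)}<\varepsilon\). The \(\sigma_1\) obtained this way are relatively dense, so \(g\) is almost periodic. I expect Step 2, relative density of divisible translation numbers, to be the only real work.
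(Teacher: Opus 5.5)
\textbf{Gap in Step 2.} The step you yourself call ``the only real work'' is not established. In your final argument, the pigeonhole over the $q(L+1)$ classes $C_{r,\rho}$ gives two integers $k_1<k_2$ in the window lying in the \emph{same} class. That yields $k_1\equiv k_2 \pmod q$ and $\sup_{m}\norm{f(m+k_2)-f(m+k_1)}<\varepsilon$. But then $0<k_2-k_1\le q(L+1)$ is a \emph{small} multiple of $q$, not one located near $M$. This is the same obstruction you hit earlier.

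What you actually need is a $k$ in the window whose reference point $r\le L$ satisfies $r\equiv k\pmod q$. The phrase ``lying in classes with $r\equiv k$'' assumes exactly this. The covering from Lemma~\ref{lem:finite1} produces some $r$ with $f(\cdot+k)$ close to $f(\cdot+r)$, but gives no control over $r \bmod q$. A second, smaller issue: even with such $k,r$, the bound $\sup_{m\ge0}\norm{f(m+k)-f(m+r)}<\varepsilon/2$ only controls $f(n+\sigma)-f(n)$ for $n\ge r$. Calling $\sigma=k-r$ an $\varepsilon$-translation number therefore needs an extra argument for $n<r$.

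\textbf{How to fix it.} The missing idea is already latent in your Step 1: take all the $\tau_j$ equal. If $\tau$ is an $\varepsilon/q$-translation number of $f$, telescoping gives
\[
f(n+q\tau)-f(n)=\sum_{j=0}^{q-1}\bigl(f(n+(j+1)\tau)-f(n+j\tau)\bigr),
\]
so $\sup_n\norm{f(n+q\tau)-f(n)}\le q\sup_n\norm{f(n+\tau)-f(n)}<\varepsilon$. Hence, for $g(u)=f(r+qu)$,
\[
\sup_{u\ge0}\norm{g(u+\tau)-g(u)}<\varepsilon .
\]
Thus every $\varepsilon/q$-translation number of $f$ is directly an $\varepsilon$-translation number of $g$. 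These numbers are relatively dense by hypothesis, so no search for divisible translation numbers in windows is needed: $\sigma_1=\tau\in[M,M+L]$ immediately.

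With this replacement your route becomes a complete and fully elementary proof. The paper instead invokes the Banach-space Bohr approximation theorem: it uniformly approximates $f$ by trigonometric polynomials, observes that $P(r+qu)-P(r)$ is again a trigonometric polynomial in $u$, and uses closure of almost periodicity under uniform limits. That is heavier machinery than the lemma requires.
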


\begin{proof}
By the Banach space-valued Bohr approximation theorem, \(f\) is a uniform
limit on \(\Nzero\) of trigonometric polynomials; see~\cite{Bochner1933}. Thus, \(f\) can be uniformly approximated by finite sums of the form
\[
   P(n)=b_0+\sum_{j=1}^J
   \bigl(b_j\cos(2\pi\lambda_j n)+c_j\sin(2\pi\lambda_j n)\bigr),
   \qquad b_j,c_j\in B .
\]
For such a polynomial, elementary angle-addition identities show that
\(P(r+qu)-P(r)\) is again a \(B\)-valued trigonometric polynomial in
\(u\).  The claim then follows since uniform limits of almost periodic sequences are almost periodic.
\end{proof}

The following conditioning lemma is the probabilistic device that lets the
sssi identities survive after conditioning on a path event such as almost periodicity. 

\begin{lemma}\label{lem:cond}
Let \(S\) be a measurable path space, let \(A\subset S\) be measurable,
and let \(T,S_c:S\to S\) be measurable maps.  Suppose \(TX\dd S_cX\),
\(\Pp(X\in A)>0\), \(A\subseteq T^{-1}A\), and
\(\{x:S_cx\in A\}=A\).  Then, under \(\mathbb{Q}=\Pp(\,\cdot\,|X\in A)\),
\[
   TX\dd S_cX .
\]
\end{lemma}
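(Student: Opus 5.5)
The plan is to show that for every measurable \(D\subset S\),
\[
   \mathbb Q(TX\in D)=\mathbb Q(S_cX\in D).
\]
I will write both sides as ratios with the common denominator \(\Pp(X\in A)\). The point is that both the event \(\{X\in A\}\) and the event in question can be expressed through \(TX\) on one side and through \(S_cX\) on the other.

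\textbf{Step 1 (right-hand side).} By the invariance hypothesis \(\{x:S_cx\in A\}=A\), we have \(\{X\in A\}=\{S_cX\in A\}\). Hence
\[
   \mathbb Q(S_cX\in D)=\frac{\Pp(S_cX\in D,\ X\in A)}{\Pp(X\in A)}
   =\frac{\Pp(S_cX\in D\cap A)}{\Pp(X\in A)}.
\]

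\textbf{Step 2 (left-hand side).} Here I would use the inclusion \(A\subseteq T^{-1}A\), which says \(\{X\in A\}\subseteq\{TX\in A\}\), and upgrade it to an equality up to a null set. Applying the distributional identity \(TX\dd S_cX\) to the set \(A\) gives
\[
   \Pp(TX\in A)=\Pp(S_cX\in A)=\Pp(X\in A).
\]
Combined with the inclusion, this yields \(\{X\in A\}=\{TX\in A\}\) almost surely. Therefore
\[
   \Pp(TX\in D,\ X\in A)=\Pp(TX\in D\cap A)=\Pp(S_cX\in D\cap A),
\]
where the last equality is again \(TX\dd S_cX\), now applied to the set \(D\cap A\). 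Dividing by \(\Pp(X\in A)>0\) and comparing with Step 1 gives the claim.

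\textbf{Main obstacle.} The only delicate point is Step 2. The hypothesis gives only a one-sided inclusion for \(T\), so one must first convert it into an almost-sure equality of events. This uses the finiteness of the measure together with the invariance of \(A\) under \(S_c\). No further probabilistic input is needed, and measurability of all the sets involved is guaranteed by the measurability of \(T\), \(S_c\) and \(A\).
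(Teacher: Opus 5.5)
Your proof is correct and follows essentially the same route as the paper. You upgrade \(A\subseteq T^{-1}A\) to the almost-sure equality \(\{X\in A\}=\{TX\in A\}\) using \(\Pp(TX\in A)=\Pp(S_cX\in A)=\Pp(X\in A)\), and then transfer the restricted law through \(TX\dd S_cX\). The only cosmetic difference is that you test against indicators \(\mathbf 1_D\) rather than general bounded measurable \(F\), which is equivalent.
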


\begin{proof}
The hypotheses give
\[
   \Pp(TX\in A)=\Pp(S_cX\in A)=\Pp(X\in A).
\]
Since \(A\subseteq T^{-1}A\), \(A=T^{-1}A\) modulo the law of \(X\).
Thus, for every bounded measurable function \(F\),
\[
\begin{aligned}
   \E_{\mathbb{Q}}[F(TX)]
   =\frac{\E[F(TX)\mathbf1_{\{X\in A\}}]}{\Pp(X\in A)}
     &=\frac{\E[F(TX)\mathbf1_{\{TX\in A\}}]}{\Pp(X\in A)}\\
   &=\frac{\E[F(S_cX)\mathbf1_{\{S_cX\in A\}}]}{\Pp(X\in A)}
     =\E_{\mathbb{Q}}[F(S_cX)].
\end{aligned}
\]
This proves the claimed conditional distributional identity.
\end{proof}


We now prove the main result of this section. Let $E_1\Delta E_2$ denote the symmetric difference of events $E_1$ and $E_2$. 
\begin{theorem}\label{thm:main1}
Let \(B\) be a separable Banach space and let
\((X_n)_{n\geq0}\) be a \(B\)-valued \(p\)-adic \(H\)-sssi process.
Then
\[
  \Pp(\{X\hbox{ is almost periodic}\}\Delta \{X\hbox{ is }p\hbox{-adically continuous}\})
  =0.
\]
\end{theorem}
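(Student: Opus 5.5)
The inclusion \(\{X\text{ is }p\text{-adically continuous}\}\subseteq\{X\text{ is almost periodic}\}\) is deterministic: continuity implies limit periodicity, which implies Bohr almost periodicity (Figure~\ref{fig:deterministic-hierarchy}). So it suffices to show that a.s.\ on \(A:=\{X\text{ is almost periodic}\}\), \(X\) is \(p\)-adically continuous. If \(\Pp(X\in A)=0\) there is nothing to prove; otherwise I will work under \(\mathbb Q=\Pp(\,\cdot\,|X\in A)\).

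\textbf{Step 1: transfer the symmetries to \(\mathbb Q\).} I will use Lemma~\ref{lem:cond} with \(T_{K,r}x=(x(r+p^Ku)-x(r))_{u\ge0}\) and \(S_cx=p^{-KH}x\), which is the scaling \(c=p^{-KH}\). The hypothesis \(T_{K,r}X\dd S_cX\) follows from the two identities in \eqref{eq:sssi-1d}: first stationarity of increments (shift by \(r\)), then scaling by \(a=p^K\), together with \(X_0=0\). Scaling preserves almost periodicity, so \(\{x:S_cx\in A\}=A\). Lemma~\ref{lem:arith1} gives \(A\subseteq T_{K,r}^{-1}A\). Hence under \(\mathbb Q\),
\[
(X_{r+p^Ku}-X_r)_{u\ge0}\dd p^{-KH}(X_u)_{u\ge0}.
\]

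\textbf{Step 2: a uniform bound from finitely many base points.} Set \(M:=\sup_n\norm{X_n}\), which is finite on \(A\) because almost periodic sequences are bounded. Step~1 then gives, for each fixed \(r\),
\[
\sup_u\norm{X_{r+p^Ku}-X_r}\dd p^{-KH}M\quad\text{under }\mathbb Q,
\]
so this quantity tends to \(0\) in \(\mathbb Q\)-probability as \(K\to\infty\). The difficulty is that continuity requires a supremum over all \(r\), whereas the distributional identity handles one \(r\) at a time. Lemma~\ref{lem:finite1} fixes this. Fix \(\eta>0\). On \(A\) there is a random \(L=L(X,\eta)\) such that for every \(h\),
\[
\sup_m\norm{(X_{m+h}-X_h)-(X_{m+r}-X_r)}<\eta
\]
for some \(r\le L\). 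Taking \(m=p^Ku\) yields
\[
D_K:=\sup_{h,u}\norm{X_{h+p^Ku}-X_h}\le\eta+\max_{0\le r\le L}\ \sup_u\norm{X_{r+p^Ku}-X_r}.
\]
Choose a deterministic \(L_0\) with \(\mathbb Q(L>L_0)<\delta\). Then
\[
\mathbb Q(D_K>2\eta)\le\delta+\sum_{r\le L_0}\mathbb Q\bigl(p^{-KH}M>\eta\bigr)=\delta+(L_0+1)\,\mathbb Q\bigl(M>\eta p^{KH}\bigr),
\]
and the last term tends to \(0\) as \(K\to\infty\). Since \(D_K\) is nonincreasing in \(K\) (because \(p^{K+1}u=p^K(pu)\)), \(\lim_K D_K\) exists, and \(\mathbb Q(\lim_K D_K>2\eta)\le\delta\). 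Letting \(\delta\downarrow0\) and then \(\eta\downarrow0\) gives \(\lim_KD_K=0\) \(\mathbb Q\)-a.s., which is exactly \eqref{eq:padic-cont-1d}.

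The main obstacle I expect is Step~2's reduction from the supremum over all base points \(h\) to finitely many \(r\). Lemma~\ref{lem:finite1} provides this reduction, but the bound \(L\) it produces is random, which forces the truncation at \(L_0\). Secondary technical points are the measurability of \(L(X,\eta)\), which can be handled by taking the minimal valid integer, and checking the hypotheses of Lemma~\ref{lem:cond}, in particular \(A\subseteq T^{-1}A\) via Lemma~\ref{lem:arith1}.
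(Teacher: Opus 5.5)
Your proof is correct and follows essentially the same route as the paper. You use the same deterministic forward direction, the same use of Lemma~\ref{lem:cond} with the recentered sublattice maps and scaling by $p^{-KH}$, and the same reduction via Lemma~\ref{lem:finite1} to finitely many base points $r\le L_0$, closed by a union bound against $\mathbb{Q}(M\ge c\,p^{KH})\to0$; the only difference is cosmetic, in that you work with the monotone quantity $\sup_{h,u}\norm{X_{h+p^Ku}-X_h}$ and a random $L$ where the paper uses the increasing events $C_K(\varepsilon)$ and $D_L(\eta)$.
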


\begin{proof}
Continuity implies almost periodicity, which follows from the deterministic relation in Figure \ref{fig:deterministic-hierarchy}. We provide details here for completeness. If
\[
   \sup_{n,u\geq0}\norm{f(n+p^K u)-f(n)}<\varepsilon ,
\]
then every positive multiple \(\tau=p^K u\) is an
\(\varepsilon\)-translation number:
\[
   \sup_{n\geq0}\norm{f(n+\tau)-f(n)}<\varepsilon.
\]
The set \(p^K\N\) has gaps equal to \(p^K\), and hence is relatively dense.

For the converse, let \(A\) be the event that \(X(\cdot)\) is almost
periodic.  If \(\Pp(A)=0\), there is nothing to prove.  Otherwise, work
under the conditional probability \(\mathbb{Q}=\Pp(\cdot\mid A)\).

Let $C$ be the event that $X\hbox{ is }p\hbox{-adically continuous}$. It remains to show that $\Pp(A\cap C^c)=0$. By \eqref{eq:padic-cont-1d},
\begin{equation}\label{eq:C-from-CK-1d}
   C=\bigcap_{j=1}^{\infty}\bigcup_{K=0}^{\infty}C_K\Big(\frac{1}{j}\Big),
\end{equation}
where we define
\begin{equation*}
   C_K(\varepsilon)=
   \left\{\sup_{n,u\geq0}\norm{X_{n+p^K u}-X_n}<\varepsilon\right\}.
\end{equation*}

Suppose for the moment that for any $\varepsilon>0$, $\mathbb{Q}(C_K(\varepsilon))\to 1$ as $K\to\infty$. 
Because \(C_K(\varepsilon)\) is increasing in \(K\),
\[
   \mathbb{Q}\Big(\bigcup_K C_K(\varepsilon)\Big)=1 .
\]
Therefore, \eqref{eq:C-from-CK-1d} gives $\mathbb{Q}(C)=1$, and hence $\Pp(A\cap C^c)=\Pp(A)\,\mathbb{Q}(C^c)=0$. 
This completes the proof. 

In the rest of the proof, we fix $\varepsilon>0$ and show that $\mathbb{Q}(C_K(\varepsilon))\to 1$ as $K\to\infty$. 
Put \(\eta=\varepsilon/2\), and define the event
\begin{equation*}
\begin{aligned}
   D_L(\eta)=
   \bigl\{&\forall h\geq0,\ \exists r\in\{0,\dots,L\},\\
   &\sup_{m\geq0}\norm{(X_{m+h}-X_h)-(X_{m+r}-X_r)}<\eta
   \bigr\}.
\end{aligned}
\end{equation*}
 By Lemma~\ref{lem:finite1},
\(\mathbb{Q}(\bigcup_LD_L(\eta))=1\), and the events
\(D_L(\eta)\) are increasing.  Given \(\gamma>0\), choose \(L_0\) with
\(\mathbb{Q}(D_{L_0}(\eta))>1-\gamma\).  On the event $C_K(\varepsilon)^c\cap   D_{L_0}(\eta)$, there exist $n,u\geq 0$ such that $\norm{X_{n+p^K u}-X_n}\geq\varepsilon$. Moreover, there exists \(r\in\{0,\dots,L_0\}\) such that
\[
   \sup_{m\geq0}
   \norm{(X_{m+n}-X_n)-(X_{m+r}-X_r)}<\frac{\varepsilon}{2} .
\]
Taking \(m=p^K u\) gives
\[
\begin{aligned}
   \norm{X_{n+p^K u}-X_n}
   &\leq
   \norm{(X_{n+p^K u}-X_n)-(X_{r+p^K u}-X_r)}
      +\norm{X_{r+p^K u}-X_r}\\
   &<\frac{\varepsilon}{2}+\norm{X_{r+p^K u}-X_r}.
\end{aligned}
\]
Therefore,
\begin{equation}\label{eq:C-inclusion-1d}
   C_K(\varepsilon)^c
   \subseteq
   D_{L_0}(\eta)^c
   \cup
   \bigcup_{r=0}^{L_0}\left\{\sup_{u\geq0}\norm{X_{r+p^K u}-X_r}\geq\frac{\varepsilon}{2}\right\}.
\end{equation}

\sloppy Next, we control the probabilities of the events $\left\{\sup_{u\geq0}\norm{X_{r+p^K u}-X_r}\geq\frac{\varepsilon}{2}\right\}$. Since almost periodic sequences are bounded, we can define
\[
   M=\sup_{n\geq0}\norm{X_n}<\infty,\qquad \mathbb{Q}\hbox{-a.s.}
\]
For \(r,K\geq0\), define the transformation
\[
   \Phi_{r,K}X=(X_{r+p^K u}-X_r)_{u\geq0}.
\]
By the two identities in \eqref{eq:sssi-1d}, we have
\begin{equation}\label{eq:phi-law-1d}
   \Phi_{r,K}X\dd p^{-KH}X.
\end{equation}
By Lemma~\ref{lem:arith1}, \(A\subseteq\{\Phi_{r,K}X\in A\}\).  Since
nonzero scalar multiplication preserves almost periodicity, Lemma
\ref{lem:cond} and \eqref{eq:phi-law-1d} give, under \(\mathbb{Q}\),
\begin{equation*}
   \sup_{u\geq0}\norm{X_{r+p^K u}-X_r}\dd p^{-KH}M .
\end{equation*}
Thus, for every fixed \(r\) and \(\delta>0\),
\begin{equation}\label{eq:Y-tail-1d}
   \mathbb{Q}\left(\sup_{u\geq0}\norm{X_{r+p^K u}-X_r}\geq\delta\right)
   =\mathbb{Q}(p^{-KH}M\geq\delta)
   =\mathbb{Q}(M\geq\delta p^{KH}).
\end{equation}

Combining \eqref{eq:C-inclusion-1d} with \eqref{eq:Y-tail-1d} and the union bound gives
\begin{equation*}
\begin{aligned}
   \mathbb{Q}(C_K(\varepsilon)^c)
   &\leq \mathbb{Q}(D_{L_0}(\eta)^c)
      +\sum_{r=0}^{L_0}\mathbb{Q}
      \left(\sup_{u\geq0}\norm{X_{r+p^K u}-X_r}\geq\frac{\varepsilon}{2}\right)\\
   &< \gamma+\sum_{r=0}^{L_0}\mathbb{Q}\left(M\geq \frac{\varepsilon p^{KH}}{2}\right)\\
   &\leq \gamma+(L_0+1)\mathbb{Q}\left(M\geq \frac{\varepsilon p^{KH}}{2}\right).
\end{aligned}
\end{equation*}
Thus, 
\[
   \limsup_{K\to\infty}\mathbb{Q}(C_K(\varepsilon)^c)\leq\gamma .
\]
Letting \(\gamma\downarrow0\) gives \(\mathbb{Q}(C_K(\varepsilon))\to1\). This completes the proof.
\end{proof}

As a consequence, the limit-periodic path event has the same probability as the \(p\)-adic
continuity event, which follows directly from Theorem \ref{thm:main1} and the deterministic relations described in Figure \ref{fig:deterministic-hierarchy}.
\begin{corollary}\label{coro:limit p}
Let \(X=(X_n)_{n\geq0}\) be a Banach space-valued \(p\)-adic \(H\)-sssi process.  Then
\[
   \Pp(\{X\hbox{ is limit periodic}\}\Delta\{X\hbox{ is }p\hbox{-adically continuous}\})
   =0.
\]
\end{corollary}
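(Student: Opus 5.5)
The proof is a sandwich argument: place the limit-periodic path event between the two events of Theorem~\ref{thm:main1}. Write \(C\) for the event that \(X\) is \(p\)-adically continuous, \(LP\) for the event that \(X\) is limit periodic, and \(A\) for the event that \(X\) is (Bohr) almost periodic. The deterministic implications in Figure~\ref{fig:deterministic-hierarchy} give the pathwise inclusions
\[
   C\subseteq LP\subseteq A .
\]
All three events are measurable, as noted in Section~\ref{sec:conventions}.

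\emph{First inclusion.} The inclusion \(C\subseteq LP\) is the argument recorded before the figure. If \(f\) is uniformly \(p\)-adically continuous, set \(f_K(n)=f(r_K(n))\), where \(r_K(n)\) is the least residue of \(n\) modulo \(p^K\). Each \(f_K\) is \(p^K\)-periodic. Since \(n-r_K(n)\in p^K\Nzero\), we have
\[
   \sup_n\norm{f(n)-f_K(n)}\le\sup_{m,u}\norm{f(m+p^Ku)-f(m)}\to0 .
\]

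\emph{Second inclusion.} For \(LP\subseteq A\), let \(g\) be \(T\)-periodic with \(\sup_n\norm{f(n)-g(n)}<\varepsilon/2\). Then every multiple of \(T\) is an \(\varepsilon\)-translation number of \(f\), and the set \(T\N\) is relatively dense. This holds for vector-valued \(f\) just as in the scalar case.

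With both inclusions in hand, \(LP\,\Delta\, C=LP\setminus C\subseteq A\setminus C\subseteq A\,\Delta\, C\). Theorem~\ref{thm:main1} gives \(\Pp(A\,\Delta\, C)=0\), so \(\Pp(LP\,\Delta\, C)=0\). One small point: the corollary is stated for a Banach space-valued process, while the theorem assumes \(B\) is separable. I would read the corollary under the standing separability convention of Section~\ref{sec:conventions}, since that is what the path-space framework requires.

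There is no real obstacle here; the corollary is a two-line consequence of the theorem. The only thing to verify is that the two deterministic inclusions hold for arbitrary Banach-valued sequences, and both arguments above are norm-only.
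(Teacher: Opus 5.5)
Your proof is correct and follows the paper's own argument: the pathwise inclusions \(C\subseteq LP\subseteq A\) from the deterministic hierarchy in Figure~\ref{fig:deterministic-hierarchy}, combined with \(\Pp(A\,\Delta\,C)=0\) from Theorem~\ref{thm:main1}. Reading the corollary under the standing separability assumption of Section~\ref{sec:conventions} is also the correct interpretation.
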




\section{Asymptotically Almost-Periodic Paths}
\label{sec:variants}

In this section, we prove that $p$-adic continuity and asymptotic almost periodicity are equivalent under $\Pp$ for $p$-adic sssi processes. Recall that a Banach space-valued sequence \(f:\Nzero\to B\) is asymptotically almost periodic
if \(f(n)=g(n)+c(n)\), \(n\geq0\), where \(g\) is almost periodic and
\(c(n)\to0\) in \(B\). We follow a path similar to the proof of Theorem \ref{thm:main1}, starting with analogues of Lemmas \ref{lem:finite1} and \ref{lem:arith1}.

\begin{lemma}\label{lem:aap_finite}
Let \(B\) be a Banach space and let \(f:\Nzero\to B\) be
asymptotically almost periodic.  For every \(\eta>0\) there is
\(L=L(f,\eta)\) such that, for every \(h\geq0\), some
\(r\in\{0,\dots,L\}\) satisfies
\begin{equation}
    \sup_{m\geq0}
   \norm{(f(m+h)-f(h))-(f(m+r)-f(r))}<\eta .\label{eq:ts}
\end{equation}
\end{lemma}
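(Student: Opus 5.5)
Write \(f=g+c\) with \(g\) almost periodic and \(c(n)\to0\). The plan is to apply Lemma~\ref{lem:finite1} to \(g\) and absorb the error from \(c\). The difficulty is that \(c\) is small only for large arguments, so the case of small \(h\) has to be handled separately. That case is trivial, however: for \(h\le L\) we may take \(r=h\).

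First choose \(N\) with \(\norm{c(n)}<\eta/8\) for all \(n\ge N\). Then apply Lemma~\ref{lem:finite1} to \(g\) with tolerance \(\eta/2\). This gives \(L_g\) such that for every \(h'\ge0\) some \(r'\in\{0,\dots,L_g\}\) satisfies
\[
\sup_{m\ge0}\norm{(g(m+h')-g(h'))-(g(m+r')-g(r'))}<\eta/2.
\]
The only problem is that \(r'\) may be smaller than \(N\), where \(c(r')\) and \(c(m+r')\) are not controlled. To avoid this, I will shift everything by \(N\) and apply the lemma to \(g_N(n):=g(n+N)\). This is again almost periodic, since it has the same translation numbers as \(g\). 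Let \(L_g\) now denote the constant for \(g_N\), and set \(L=N+L_g\).

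Now fix \(h\ge0\). If \(h\le L\), take \(r=h\), and the left side of \eqref{eq:ts} is \(0\). If \(h>L\ge N\), apply the lemma for \(g_N\) at \(h'=h-N\). This gives \(r'\le L_g\) with the displayed bound for \(g_N\), that is, for \(g\) at the points \(h\) and \(r:=r'+N\in\{N,\dots,L\}\). Both \(h\) and \(r\) are at least \(N\), and so are \(m+h\) and \(m+r\). Hence each of the four values \(c(m+h),c(h),c(m+r),c(r)\) has norm less than \(\eta/8\). The triangle inequality then gives
\[
\norm{(f(m+h)-f(h))-(f(m+r)-f(r))}<\eta/2+4\cdot\eta/8=\eta
\]
uniformly in \(m\). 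Taking the supremum over \(m\) gives at most \(\eta\). To keep the inequality strict, I will use \(\eta/10\) instead of \(\eta/8\).

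The only step needing care is this shift, which keeps the reference point \(r\) in the region where \(c\) is small; everything else is a direct use of Lemma~\ref{lem:finite1}. I do not expect any genuine obstacle.
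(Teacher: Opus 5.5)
Your proof is correct and follows the paper's strategy: write $f=g+c$, take $r=h$ for small $h$, and for large $h$ apply Lemma~\ref{lem:finite1} to $g$ with a reference point $r$ beyond the threshold where $c$ is small, so the four $c$-terms together cost less than $\eta/2$. The only difference is how you force $r$ past the threshold: you apply Lemma~\ref{lem:finite1} to the shifted sequence $g(\cdot+N)$, whose translation numbers include those of $g$. The paper instead applies it to $g$ and then moves each reference point $s\le L_g$ to $s+\tau_s\ge R$ using a large $\eta/8$-translation number $\tau_s$, which costs an extra error term and a larger window $L$, so your shift is a slightly more economical route to the same conclusion.
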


\begin{proof}
Write \(f=g+c\), where \(g\) is almost periodic and \(c(n)\to0\).  Choose
\(R\geq1\) so that
\[
   n\geq R\quad\Longrightarrow\quad \norm{c(n)}<\frac{\eta}{8} .
\]
By Lemma~\ref{lem:finite1} applied to \(g\) with error \(\eta/4\), there exists
\(L_g\) such that for every \(h\geq0\) there is
\(s\in\{0,\dots,L_g\}\) with
\begin{equation}
    \sup_{m\geq0}
   \norm{(g(m+h)-g(h))-(g(m+s)-g(s))}<\frac{\eta}{4} .\label{eq:s1}
\end{equation}
For each \(s\in\{0,\dots,L_g\}\), choose an \(\eta/8\)-translation number
\(\tau_s\) of \(g\) so large that \(s+\tau_s\geq R\). 
It follows that
\begin{equation}
\begin{aligned}
&\sup_{m\geq0}
\norm{(g(m+s+\tau_s)-g(s+\tau_s))-(g(m+s)-g(s))}\\
&\qquad\leq
\sup_{m\geq0}\norm{g(m+s+\tau_s)-g(m+s)}
   +\norm{g(s+\tau_s)-g(s)}
<\frac{\eta}{4} .
\end{aligned}
\label{s2}
\end{equation}
Let $L=\max\{R-1,L_g+\max\{\tau_0,\dots,\tau_{L_g}\}\}$. 
If \(h<R\), take \(r=h\) and \eqref{eq:ts} is trivial.  If \(h\geq R\), choose \(s\) as above and take
\(r=s+\tau_s\) so $r\leq L$.  For this \(r\), \eqref{eq:s1} and \eqref{s2} together yield
\begin{equation}
    \sup_{m\geq0}
   \norm{(g(m+h)-g(h))-(g(m+r)-g(r))}<\frac{\eta}{2}.\label{eq:t1}
\end{equation}
Also \(h,m+h,r,m+r\geq R\), so
\begin{equation}
    \label{eq:t2}
\begin{aligned}
&\sup_{m\geq0}
\norm{(c(m+h)-c(h))-(c(m+r)-c(r))}\\
&\qquad\leq
\sup_{m\geq0}\bigl(\norm{c(m+h)}+\norm{c(h)}
                  +\norm{c(m+r)}+\norm{c(r)}\bigr)
<\frac{\eta}{2} .
\end{aligned}
\end{equation}
Combining \eqref{eq:t1} and \eqref{eq:t2} proves the claim since $f=g+c$.
\end{proof}

\begin{lemma}\label{lem:aap_arith}
If \(f:\Nzero\to B\) is asymptotically almost periodic, then for every
\(r\geq0\) and \(q\geq1\), the sequence \(u\mapsto f(r+qu)-f(r)\) is
asymptotically almost periodic.
\end{lemma}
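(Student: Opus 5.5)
We will prove Lemma~\ref{lem:aap_arith} by reducing it to Lemma~\ref{lem:arith1} through the decomposition \(f=g+c\). Here \(g:\Nzero\to B\) is almost periodic and \(c(n)\to0\). Fix \(r\geq0\) and \(q\geq1\), and put \(F(u)=f(r+qu)-f(r)\). Substituting the decomposition gives
\[
   F(u)=\bigl(g(r+qu)-g(r)\bigr)+\bigl(c(r+qu)-c(r)\bigr),\qquad u\geq0 .
\]
The first bracket \(G(u)=g(r+qu)-g(r)\) is almost periodic by Lemma~\ref{lem:arith1}, applied to \(g\). The second bracket does not tend to zero, because it converges to \(-c(r)\) and not to \(0\). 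So we do not use it directly as the vanishing part, and we move the constant \(-c(r)\) into the almost-periodic part.

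Concretely, we define
\[
   \tilde G(u)=g(r+qu)-g(r)-c(r),\qquad \tilde c(u)=c(r+qu).
\]
Then \(F=\tilde G+\tilde c\). Since \(q\geq1\), we have \(r+qu\to\infty\) as \(u\to\infty\), so \(\tilde c(u)\to0\) in \(B\). It remains to check that \(\tilde G\) is almost periodic. It differs from \(G\) by the constant \(-c(r)\), and adding a constant does not change the differences \(\tilde G(u+\tau)-\tilde G(u)=G(u+\tau)-G(u)\). Hence \(\tilde G\) has exactly the same Bohr \(\varepsilon\)-translation numbers as \(G\), and these sets are relatively dense for every \(\varepsilon>0\). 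This exhibits \(F\) as asymptotically almost periodic.

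There is no serious obstacle. The only points needing care are the constant shift just described and the observation that \(r+qu\) tends to infinity, which is where \(q\geq1\) is used. Lemma~\ref{lem:arith1} carries all the nontrivial content, through the Bohr approximation theorem.
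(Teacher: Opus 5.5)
Your proof is correct and matches the paper's argument: the paper also writes $f(r+qu)-f(r)=\bigl(g(r+qu)-g(r)-c(r)\bigr)+c(r+qu)$, applies Lemma~\ref{lem:arith1} to the first part and uses $c(r+qu)\to0$ for the second. Your remark that adding the constant $-c(r)$ does not change the translation numbers fills in a step the paper leaves implicit.
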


\begin{proof}
Write \(f=g+c\), with \(g\) almost periodic and \(c(n)\to0\).  By
Lemma~\ref{lem:arith1}, \(u\mapsto g(r+qu)-g(r)\) is almost periodic.
Moreover, \(u\mapsto -c(r)\) is constant, and
\(c(r+qu)\to0\).  Thus, 
\[
   f(r+qu)-f(r)
   =
   \bigl(g(r+qu)-g(r)-c(r)\bigr)+c(r+qu)
\]
is the sum of an almost periodic sequence and a sequence converging to
\(0\).
\end{proof}

\begin{theorem}\label{thm:asymp p}
Let \(B\) be a separable Banach space and let
\((X_n)_{n\geq0}\) be a \(B\)-valued \(p\)-adic \(H\)-sssi process.  Then
\[
  \Pp(\{X\hbox{ is asymptotically almost periodic}\}\Delta\{X\hbox{ is }p\hbox{-adically continuous}\})
  =0.
\]
\end{theorem}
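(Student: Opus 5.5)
The plan is to repeat the proof of Theorem~\ref{thm:main1} almost verbatim, with Lemmas~\ref{lem:aap_finite} and~\ref{lem:aap_arith} in place of Lemmas~\ref{lem:finite1} and~\ref{lem:arith1}.

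The easy direction is deterministic. By Figure~\ref{fig:deterministic-hierarchy}, $p$-adic continuity implies Bohr almost periodicity. Taking $c\equiv 0$ then shows that it implies asymptotic almost periodicity.

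For the converse, let $A$ be the event that $X$ is asymptotically almost periodic. Assume $\Pp(A)>0$ and set $\mathbb{Q}=\Pp(\cdot\mid A)$. By \eqref{eq:C-from-CK-1d} it suffices to prove $\mathbb{Q}(C_K(\varepsilon))\to1$ as $K\to\infty$ for each fixed $\varepsilon>0$. We again use the events $D_L(\eta)$ with $\eta=\varepsilon/2$.

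\emph{Step 1: $D_L(\eta)$ exhausts $A$.} Lemma~\ref{lem:aap_finite} gives, for every path in $A$, an $L$ for which the path lies in $D_L(\eta)$. Hence $\mathbb{Q}(\bigcup_L D_L(\eta))=1$, and we may choose $L_0$ with $\mathbb{Q}(D_{L_0}(\eta))>1-\gamma$. The triangle-inequality argument leading to \eqref{eq:C-inclusion-1d} used nothing but the defining property of $D_{L_0}(\eta)$. So the inclusion \eqref{eq:C-inclusion-1d} holds unchanged.

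\emph{Step 2: the tail bound \eqref{eq:Y-tail-1d}.} We need the following ingredients.
\begin{itemize}
\item[(i)] Asymptotically almost periodic sequences are bounded, being the sum of a bounded almost periodic $g$ and a convergent $c$. Thus $M=\sup_n\norm{X_n}<\infty$ holds $\mathbb{Q}$-a.s.
\item[(ii)] By Lemma~\ref{lem:aap_arith}, $A\subseteq\{\Phi_{r,K}X\in A\}$.
\item[(iii)] Multiplication by the nonzero scalar $p^{-KH}$ maps the class of asymptotically almost periodic sequences onto itself, since one scales $g$ and $c$ separately. Hence $\{x: p^{-KH}x\in A\}=A$.
\end{itemize}
With (ii) and (iii), Lemma~\ref{lem:cond} applied with \eqref{eq:phi-law-1d} gives, under $\mathbb{Q}$,
\[
\sup_u\norm{X_{r+p^Ku}-X_r}\dd p^{-KH}M .
\]
Therefore
\[
\mathbb{Q}(C_K(\varepsilon)^c)\le\gamma+(L_0+1)\,\mathbb{Q}\bigl(M\ge\varepsilon p^{KH}/2\bigr).
\]
Since $M<\infty$ $\mathbb{Q}$-a.s., the last term tends to $0$ as $K\to\infty$. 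Letting $\gamma\downarrow0$ concludes the proof.

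\emph{Main obstacle.} The only genuinely new input is the finiteness statement of Lemma~\ref{lem:aap_finite}, which has already been established. Beyond it, the delicate point is measurability. We need $A$ to be a measurable path event, as asserted in Section~\ref{sec:conventions}. We also need each $D_L(\eta)$ to be measurable, which holds because it is defined by countably many conditions on coordinates. Once these are in hand, the rest is routine.
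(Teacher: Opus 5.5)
Your proposal is correct and is essentially the paper's own argument: the paper also reruns the proof of Theorem~\ref{thm:main1} with Lemmas~\ref{lem:aap_finite} and~\ref{lem:aap_arith} in place of Lemmas~\ref{lem:finite1} and~\ref{lem:arith1}. In the conditioning step via Lemma~\ref{lem:cond} it uses exactly your three ingredients: boundedness of asymptotically almost periodic paths, invariance of the event under nonzero scalar multiplication, and stability under sublattice recentering. Your remark on the measurability of $A$ and $D_L(\eta)$ is a reasonable addition that the paper simply takes for granted.
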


\begin{proof}
The proof follows the same lines as in the proof of Theorem~\ref{thm:main1}, with
Lemma~\ref{lem:aap_finite} replacing Lemma~\ref{lem:finite1} and Lemma~\ref{lem:aap_arith} replacing Lemma~\ref{lem:arith1}, while noting that almost periodicity implies asymptotic almost periodicity. In applying the
conditioning step, we use that asymptotically almost-periodic paths are bounded,
that the asymptotically almost-periodic path event is invariant under nonzero
scalar multiplication, and that Lemma~\ref{lem:aap_arith} gives the required
stability under sublattice recentering. All of these assertions are easy to check and are omitted.
\end{proof}

\section{Counterexamples to Weyl and Besicovitch versions}
\label{sec:counterexamples}

In this section, we show that the equivalence of $p$-adic continuity and almost periodicity cannot be
extended to Weyl or Besicovitch almost periodicity.  The obstruction is that
these notions average over \(n\), whereas
\(p\)-adic continuity is a uniform supremum condition.  A heavy-tailed
\(p\)-adic tree makes the average tail small but leaves arbitrarily large
extreme increments at every small \(p\)-adic scale.

The next lemma provides a class of $p$-adic sssi processes, following \cite[Example 4.1]{ShenZhang2021}. 
\begin{lemma}\label{lem:tree_sssi}
Let \((\xi_{k,r})_{k\geq0,\ 0\leq r<p^{k+1}}\) be independent copies of a
real random variable \(\xi\), and extend periodically by
\[
   \xi_{k,n}=\xi_{k,r}\quad\hbox{if }n\equiv r\pmod {p^{k+1}} .
\]
Assume \(\E[|\xi|]<\infty\) and let \(H>0\).  Then
\begin{equation}\label{eq:tree-series}
   X_n=\sum_{k=0}^{\infty}p^{-kH}(\xi_{k,n}-\xi_{k,0}),
   \qquad n\geq0,
\end{equation}
converges absolutely a.s.~for every fixed \(n\), and
\((X_n)_{n\geq0}\) is a real-valued \(p\)-adic \(H\)-sssi process.
\end{lemma}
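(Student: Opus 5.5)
The proof has three parts: almost sure convergence, stationarity of increments, and $p$-adic self-similarity.

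\textbf{Convergence.} For fixed \(n\), I would bound
\[
\E\sum_k p^{-kH}|\xi_{k,n}-\xi_{k,0}|\le 2\E|\xi|\sum_k p^{-kH}<\infty .
\]
The series therefore converges absolutely a.s. It also converges in \(L^1\), which lets me identify laws of the partial sums and pass to the limit. Since finite-dimensional laws determine the law on \(\R^{\Nzero}\), it suffices to verify both identities in \eqref{eq:sssi-1d} for finitely many indices. There the partial sums converge a.s., so it is enough to check each identity level by level, or for truncations, and let the truncation go to infinity.

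\textbf{Stationary increments.} The increment is
\[
X_{n+\ell}-X_\ell=\sum_k p^{-kH}(\xi_{k,n+\ell}-\xi_{k,\ell}).
\]
For each \(k\), the field \((\xi_{k,n+\ell})_n\) is a cyclic shift of the i.i.d.\ vector \((\xi_{k,r})_{r<p^{k+1}}\), extended periodically. Cyclic shifts preserve the i.i.d.\ law, and the levels are independent. Hence \((\xi_{k,n+\ell})_{k,n}\dd(\xi_{k,n})_{k,n}\) jointly, which gives the second identity in \eqref{eq:sssi-1d}.

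\textbf{Self-similarity.} Write \(a=p^j b\) with \(p\nmid b\), so that \(|a|_p=p^{-j}\). I would split into two steps.

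First, multiplication by a unit \(b\) modulo \(p^{k+1}\) permutes residues. So \((\xi_{k,bn})_n\) is again a periodically extended i.i.d.\ vector, and \((X_{bn})\dd(X_n)\), again using independence across levels.

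Second, for \(a=p^j\), compare \(\xi_{k,p^jn}\), which depends only on \(p^jn \bmod p^{k+1}\).
\begin{itemize}
\item For \(k<j\), this residue is \(0\), so the level-\(k\) term \(\xi_{k,p^jn}-\xi_{k,0}\) vanishes.
\item For \(k\ge j\), the residue \(p^jn \bmod p^{k+1}\) equals \(p^j(n \bmod p^{k-j+1})\). So \((\xi_{k,p^jn})_n\) is a periodic i.i.d.\ family of period \(p^{k-j+1}\), distributed like \((\xi_{k-j,n})_n\). The values \(\xi_{k,p^j r}\) are distinct coordinates, which is why they are i.i.d.
\end{itemize}
Reindexing \(k'=k-j\) gives
\[
X_{p^jn}=\sum_{k'\ge0}p^{-(k'+j)H}(\xi'_{k',n}-\xi'_{k',0})\dd p^{-jH}X_n
\]
jointly in \(n\). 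Combining the two steps yields the first identity in \eqref{eq:sssi-1d}. The case \(a\) with \(b=1\), and composition of the two steps, are immediate since \(X_{p^j b n}=(X_{p^j\,\cdot})_{bn}\).

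\textbf{Main obstacle.} The only delicate point is the bookkeeping in the \(p^j\)-dilation. One must check that the sampled coordinates \(\{\xi_{k,p^jr}: 0\le r<p^{k-j+1}\}\) are distinct. One must also check that the joint law across all \(k\) matches after the index shift, which holds because the levels are independent. Finally, passing from truncated series to the infinite series requires a.s.\ convergence for each of finitely many \(n\), which we already have.
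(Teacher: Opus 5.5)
Your proof is correct and complete: you get absolute convergence from $\E|\xi|<\infty$ and $\sum_k p^{-kH}<\infty$, you get invariance of each level's i.i.d.\ block under the residue permutations $r\mapsto r+\ell$ and $r\mapsto br$ modulo $p^{k+1}$, and you handle dilation by $p^j$ with the reindexing $k\mapsto k-j$, all passed through the a.s.\ convergent series. The paper does not prove this lemma itself but defers to Shen and Zhang (2021, Example~4.1), so your argument is a direct self-contained verification in place of that citation.
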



\begin{theorem}\label{thm:WB p}
Fix any \(q\geq1\) and $H>0$.  There is a real-valued \(p\)-adic \(H\)-sssi process
which is a.s.~Weyl \(q\)-almost periodic, hence Besicovitch
\(q\)-almost periodic, but is a.s.~not \(p\)-adically continuous.
\end{theorem}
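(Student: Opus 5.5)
We will use the tree process of Lemma~\ref{lem:tree_sssi} with a heavy-tailed \(\xi\). Take \(\xi\ge0\) with \(\E[\xi^{q}]<\infty\) but \(\Pp(\xi>x)\) decaying only polynomially, say \(\Pp(\xi>x)=x^{-\alpha}\) for \(x\ge1\) with \(\alpha>q\ge1\). By Lemma~\ref{lem:tree_sssi}, \(X\) is a real-valued \(p\)-adic \(H\)-sssi process. Both required properties will be verified scale by scale in \(k\).

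\emph{Failure of \(p\)-adic continuity.} Fix \(K\). For \(k\ge K\) and \(u\ge 1\), the point \(n+p^Ku\) is \(p\)-adically close to \(n\), but the level-\(k\) coefficients \(\xi_{k,n}\) and \(\xi_{k,n+p^Ku}\) are independent once \(p^{k+1}\nmid p^Ku\). Choose \(n=0\) and \(u=p^{k-K}v\) with \(p\nmid v\), \(1\le v<p\) (or \(n\) ranging over the residues mod \(p^{k+1}\)). Then \(X_{n+p^Ku}-X_n\) contains the term \(p^{-kH}(\xi_{k,n'}-\xi_{k,n})\), where \(n'\equiv n+p^Ku\pmod{p^{k+1}}\). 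The level-\(k\) variables are \(p^{k+1}\) i.i.d.\ copies of \(\xi\), and \(p^{k+1}\Pp(\xi>p^{kH}M)=p^{k+1}p^{-kH\alpha}M^{-\alpha}\). To make this sum diverge we need \(H\alpha<1\), so we choose \(\alpha\in(q,1/H)\). This is possible when \(Hq<1\). For general \(H\) we instead take \(\xi\) with tail \(x^{-1/H}(\log x)^{-2}\) or similar, and adjust the integrability requirement accordingly; see below. With the sum divergent, Borel--Cantelli over the independent levels shows that a.s.\ infinitely many \(k\) carry some \(\xi_{k,r}\ge M p^{kH}\), with its \(p\)-adic neighbour at level \(k\) of moderate size. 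The contributions of the other levels \(j\neq k\) to the increment must be controlled. The coarser levels \(j<k\) cancel exactly, because \(n\equiv n+p^Ku\) mod \(p^{j+1}\) for \(j<K\); for \(K\le j<k\) we choose \(u\) with \(p^{k}\mid p^Ku\). The finer levels \(j>k\) are handled by picking, among the \(p-1\) choices of \(n'\), one where the tail sum is small, or simply by bounding the tail sum on an event of positive conditional probability. The result is \(\sup_{n,u}|X_{n+p^Ku}-X_n|=\infty\) a.s. for every \(K\).

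\emph{Weyl almost periodicity.} Translation numbers will be \(\tau=p^{K}\). Then \(X_{n+\tau}-X_n=\sum_{k\ge K}p^{-kH}(\xi_{k,n+\tau}-\xi_{k,n})\), a periodic sequence at each level. By Minkowski, its Weyl \(q\)-seminorm is at most \(\sum_{k\ge K}p^{-kH}\cdot 2\bigl(p^{-(k+1)}\sum_{r<p^{k+1}}|\xi_{k,r}|^q\bigr)^{1/q}\). Here we use that for a \(p^{k+1}\)-periodic sequence the Weyl seminorm equals the average over one period. By the strong law of large numbers, with a.s.\ summability handled via Borel--Cantelli on \(\frac{1}{p^{k+1}}\sum_r|\xi_{k,r}|^q>2\E[\xi^q]\), the averages are a.s.\ bounded in \(k\) for \(k\) large. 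Hence the seminorm is at most \(C\sum_{k\ge K}p^{-kH}\to0\), so \(p^K\N\) consists of Weyl \(\varepsilon\)-translation numbers. One subtlety is that the Weyl seminorm of an infinite sum of periodic terms needs Minkowski for the \(\limsup\sup\) functional; this works since the seminorm is subadditive and countable sums are handled by bounding tails uniformly.

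\emph{Main obstacle.} The difficulty is reconciling the two requirements on the tail of \(\xi\). We need \(\E|\xi|^q<\infty\), together with concentration of block averages, for the Weyl property, while we need \(\sum_k p^{k}\Pp(\xi>p^{kH})=\infty\), i.e. essentially \(\E[\xi^{1/H}]=\infty\), for discontinuity. These are compatible only if \(q<1/H\). If \(Hq\ge1\), the fix I would try first is to thin the tree. Use i.i.d.\ coefficients only at sparse levels, or replace \(\xi_{k,r}\) by level-dependent variables while keeping the scaling law. Alternatively, use a random variable \(\xi\) that is \(0\) with high probability and occasionally enormous, with the lattice structure chosen so that the Weyl averages see only the mass \(\E|\xi|^q\) while the extremes grow like \(p^{kH}\). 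Verifying the sssi property in that modified construction is the step I expect to take real work.
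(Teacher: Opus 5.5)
\textbf{Gap 1: the case $Hq\ge1$.} You flag this gap yourself, and the repairs you suggest do not close it. For $Hq\ge1$ you have no construction. The tail $x^{-1/H}(\log x)^{-2}$ gives $\sum_k p^{k+1}\Pp(\xi>Mp^{kH})\asymp\sum_k k^{-2}<\infty$, which breaks your Borel--Cantelli route to discontinuity.

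The missing idea is that the trade-off you describe does not exist. The per-level bound never needs $\E|\xi|^q<\infty$ or bounded block averages. It only needs $\sum_k p^{-kH}B_{k,q}<\infty$ a.s., where $B_{k,q}=\bigl(p^{-(k+1)}\sum_{r<p^{k+1}}|\xi_{k,r}|^q\bigr)^{1/q}$. For Pareto$(\alpha)$ tails with $\alpha<q$, $B_{k,q}$ grows only like $p^{k(1/\alpha-1/q)}$, and $p^{-kH}$ beats this once $\alpha>(H+1/q)^{-1}$. The paper makes this rigorous via $\E[B_{k,q}^\beta]\le p^{(k+1)(1-\beta/q)}\E|\xi|^\beta$ for $\beta\in((H+1/q)^{-1},\min\{\alpha,q\})$. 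Since $(H+1/q)^{-1}<1/H$ always, the choice $\alpha\in((H+1/q)^{-1},1/H)$ works for all $H,q$ with the unmodified tree. No thinning and no new sssi verification are needed.

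One caveat: if $\alpha\le1$, which is forced when $H\ge1$, the hypothesis $\E|\xi|<\infty$ of Lemma~\ref{lem:tree_sssi} fails. The paper's ``since $\alpha>q$'' does not follow from its own choice of $\alpha$. You would instead check a.s.\ convergence under $\E\log^+|\xi|<\infty$; the sssi identities themselves use no moments.

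Your discontinuity argument is a reasonable self-contained alternative to the paper's citation of an external non-boundedness result. Its cleanest form is this. Take $n$ at the largest $|\xi_{k,r}|$. The other levels are independent of level $k$, which determines $n$, so their contribution is tight. Hence the increment tends to $\infty$ in probability.

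\textbf{Gap 2: the Weyl step.} There is a more serious problem here, and it also affects the paper's display \eqref{eq:weyl-upper-bound}. You propose to interchange $\limsup_L\sup_N$ with $\sum_k$ by ``bounding tails uniformly''. This cannot be done.

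In discrete time, $\sup_N\bigl(\frac1L\sum_{n=N}^{N+L-1}|u(n)|^q\bigr)^{1/q}\ge L^{-1/q}\sup_n|u(n)|$ for every $L$. So $\norm{u}_{W^q}<\infty$ forces $u$ to be bounded. Now run your own mechanism with $\tau$ \emph{fixed}. For $k\ge v_p(\tau)$, let $n$ be the index of the largest $|\xi_{k,r}|$, $r<p^{k+1}$. The level-$k$ term of $X_{n+\tau}-X_n$ is of order $p^{k(1/\alpha-H)}\to\infty$, while the remaining levels are independent of this choice and tight. Hence, whenever $\alpha<1/H$, $\sup_n|X_{n+\tau}-X_n|=\infty$ a.s.\ for every $\tau\ge1$.

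So $\norm{X_{\cdot+\tau}-X_\cdot}_{W^q}=\infty$ for all $\tau$. The tree process is a.s.\ \emph{not} Weyl $q$-almost periodic in the sense of Definition~\ref{def}.

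What the per-level bound does establish is the Besicovitch claim. There only the initial windows $[0,L)$ occur. The interchange then follows by dominated convergence from a dyadic maximal bound that mirrors the paper's moment estimate: $\E\bigl[\sup_{L\le p^{k+1}}\bigl(\frac1L\sum_{n<L}|\xi_{k,n}|^q\bigr)^{\beta/q}\bigr]\lesssim p^{(k+1)(1-\beta/q)}$, and likewise for the shifted indices.

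For the Weyl half, a genuinely different example is needed, one whose increments at translation numbers stay bounded. Neither your sketch nor the paper's construction provides one.
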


\begin{proof}
Choose $\alpha\in((H+1/q)^{-1},H^{-1})$ and let \(\xi\) be a
symmetric random variable such that
\[
   \Pp(|\xi|>t)=t^{-\alpha},\qquad t\geq1 .
\]
Construct \(X\) from Lemma~\ref{lem:tree_sssi}, using
\eqref{eq:tree-series}.  Since \(\alpha>q\geq1\), \(\E[|\xi|]<\infty\),
so the construction is legitimate.

First we prove Weyl \(q\)-almost periodicity.  For \(K\geq1\) and
\(\tau\in p^K\N\), all levels \(k<K\) cancel:
\begin{equation}\label{eq:tree-cancellation}
   X_{n+\tau}-X_n
   =
   \sum_{k=K}^{\infty}p^{-kH}(\xi_{k,n+\tau}-\xi_{k,n}).
\end{equation}
For each \(k\), define the period average
\[
   A_{k,q}(\tau)
   =
   \left(
   p^{-(k+1)}
   \sum_{r=0}^{p^{k+1}-1}|\xi_{k,r+\tau}-\xi_{k,r}|^q
   \right)^{1/q}.
\]
The sequence \(n\mapsto \xi_{k,n+\tau}-\xi_{k,n}\) is periodic with
period \(p^{k+1}\), so a.s.
\begin{equation}
    \lim_{L\to\infty}\sup_{N\geq0}
   \left(
   \frac1L\sum_{n=N}^{N+L-1}
   |\xi_{k,n+\tau}-\xi_{k,n}|^q
   \right)^{1/q}
   =A_{k,q}(\tau).\label{eq:t3}
\end{equation}
Combining \eqref{eq:tree-cancellation} and \eqref{eq:t3} with Minkowski's inequality gives
\begin{equation}\label{eq:weyl-upper-bound}
\begin{aligned}
&\limsup_{L\to\infty}\sup_{N\geq0}
\left(
\frac1L\sum_{n=N}^{N+L-1}|X_{n+\tau}-X_n|^q
\right)^{1/q}\leq
\sum_{k=K}^{\infty}p^{-kH}A_{k,q}(\tau).
\end{aligned}
\end{equation}
By Minkowski's inequality on the finite cyclic group
\(\mathbb Z/p^{k+1}\mathbb Z\), we have
\begin{equation}
    \label{eq:c1}
\begin{aligned}
   A_{k,q}(\tau)
   &\leq
   \left(p^{-(k+1)}
   \sum_{r=0}^{p^{k+1}-1}|\xi_{k,r+\tau}|^q\right)^{1/q}\\
   &\qquad+
   \left(p^{-(k+1)}
   \sum_{r=0}^{p^{k+1}-1}|\xi_{k,r}|^q\right)^{1/q}
   =2B_{k,q},
\end{aligned}
\end{equation}
where
\[
   B_{k,q}=
   \left(
   p^{-(k+1)}
   \sum_{r=0}^{p^{k+1}-1}|\xi_{k,r}|^q
   \right)^{1/q}.
\]
Moreover, for $\beta\in((H+1/q)^{-1},\min\{\alpha,q\})$, we have by H\"{o}lder's inequality,
\[
\begin{aligned}
   \mathbb E [B_{k,q}^{\beta}]
   &\le
   p^{-(k+1)\beta/q}
   \sum_{r=0}^{p^{k+1}-1}\mathbb E[|\xi_{k,r}|^\beta]
   =
   p^{(k+1)(1-\beta/q)}\mathbb E[|\xi|^\beta] .
\end{aligned}
\]
Therefore,
\[
   \sum_k \mathbb E\bigl[(p^{-kH}B_{k,q})^\beta\bigr]
   \le
   C\sum_k p^{-k(\beta H+\beta/q-1)}<\infty 
\]
for some $C>0$. 
This implies
\begin{equation}
     \sum_k p^{-kH}B_{k,q}<\infty
   \qquad\text{a.s.}\label{eq:c2}
\end{equation}
Indeed, for \(0<\beta\le1\), use
\((\sum a_k)^\beta\le\sum a_k^\beta\); for \(\beta\ge1\), use
Minkowski's inequality in \(L^\beta\).
Combining \eqref{eq:c1} and \eqref{eq:c2}, we arrive at
\begin{equation}\label{eq:B-tail}
  \sum_{k=K}^{\infty}p^{-kH}A_{k,q}(\tau)\leq 2 \sum_{k=K}^{\infty}p^{-kH}B_{k,q}\to 0\qquad\text{a.s.}
\end{equation}
as \(K\to\infty\). Combining 
\eqref{eq:weyl-upper-bound} and  \eqref{eq:B-tail} yields that a.s., for every \(\varepsilon>0\), there exists some \(K\) with the property that every \(\tau\in p^K\N\) is a Weyl
\(q\)-\(\varepsilon\)-translation number.
Since \(p^K\N\) has bounded gaps, \(X\) is Weyl \(q\)-almost periodic.
Besicovitch \(q\)-almost periodicity follows because the Besicovitch
seminorm is no larger than the Weyl seminorm.

On the other hand, using $H<1/\alpha$, Corollary 4 of \cite{Zhang2026DiscountedBRW} implies that the process $X$ is not sample bounded, and hence not $p$-adically continuous. This completes the proof.
\end{proof}

\section{Random Field Extensions}\label{sec:multi}

This section extends some previous results to the random field setting. We first record the corresponding definitions of $p$-adic sssi random fields, $p$-adic continuity, and almost periodicity.

Let $d\geq 1$. For a process indexed by \(\Nzero^d\), let \(0\) denote
the zero vector.  A real-valued \emph{\(p\)-adic \(H\)-sssi random field} is
a process \(X=(X_n)_{n\in\Nzero^d}\) such that
\begin{equation*}
   (X_{a n})_{n\in\Nzero^d}\dd |a|_p^H(X_n)_{n\in\Nzero^d},
   \qquad
   (X_{n+\ell}-X_\ell)_{n\in\Nzero^d}\dd (X_n-X_0)_{n\in\Nzero^d}
\end{equation*}
for all \(a\in\N\) and \(\ell\in\Nzero^d\).  
We say a deterministic function $f:\Nzero^d\to\R$ is 
\emph{(uniformly) \(p\)-adically continuous} if
\begin{equation}\label{eq:padic-cont-d}
   \forall\varepsilon>0,\ \exists K,\quad
   \sup_{n,m\in\Nzero^d}|f(n+p^K m)-f(n)|<\varepsilon .
\end{equation}
A set \(T\subset\Nzero^d\) is \emph{relatively dense} if there is
\(L\in\N\) such that every box \(a+\{0,\dots,L\}^d\), \(a\in\Nzero^d\) contains an element of \(T\).  A function \(f:\Nzero^d\to\R\) is
\emph{Bohr almost periodic}, or simply \emph{almost periodic}, if for every
\(\varepsilon>0\) the set
\[
   \left\{h\in\Nzero^d:
   \sup_{n\in\Nzero^d}|f(n+h)-f(n)|<\varepsilon\right\}
\]
is relatively dense in \(\Nzero^d\); see~\cite{Bochner1933}.

\sloppy We proceed along a route similar to the proof of Theorem \ref{thm:main1}, starting from finite-dimensional analogues of
Lemmas~\ref{lem:finite1} and \ref{lem:arith1}. 

\begin{lemma}\label{lem:finite_d}
Let \(d\geq 1\), and let \(f:\Nzero^d\to\R\) be almost periodic.  Then
\(f\) is bounded.  Moreover, for every \(\eta>0\) there is
\(L=L(f,\eta)\) such that, for every \(h\in\Nzero^d\), some
\(r\in\{0,\dots,L\}^d\) satisfies
\begin{equation}\label{eq:finite-reduction-d}
   \sup_{m\in\Nzero^d}
   |(f(m+h)-f(h))-(f(m+r)-f(r))|<\eta .
\end{equation}
\end{lemma}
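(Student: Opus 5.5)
The plan is to imitate the proof of Lemma~\ref{lem:finite1}, with intervals replaced by boxes. Both assertions come from one construction.

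Fix \(\eta>0\). By almost periodicity, choose \(L\in\N\) so that every box \(a+\{0,\dots,L\}^d\), \(a\in\Nzero^d\), contains an element of the set of \(\eta/2\)-translation numbers,
\[
T_{\eta/2}=\Bigl\{\tau\in\Nzero^d:\sup_{n}|f(n+\tau)-f(n)|<\eta/2\Bigr\}.
\]
Given \(h\in\Nzero^d\), I will produce \(\tau\in T_{\eta/2}\) with \(0\le h_i-\tau_i\le L\) coordinatewise. Define \(a\in\Nzero^d\) by \(a_i=\max\{h_i-L,0\}\). The box \(a+\{0,\dots,L\}^d\) contains some \(\tau\in T_{\eta/2}\).

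For each coordinate \(i\) there are two cases:
\begin{itemize}
\item If \(h_i\ge L\), then \(\tau_i\in[h_i-L,h_i]\), so \(r_i=h_i-\tau_i\in\{0,\dots,L\}\).
\item If \(h_i<L\), then \(\tau_i\in[0,L]\) while possibly \(\tau_i>h_i\), so \(h_i-\tau_i\) can be negative. This is the only real obstacle compared with \(d=1\), where we could simply take \(r=h\).
\end{itemize}

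To fix the second case I will enlarge the target box. Take \(a_i=\max\{h_i-L,0\}\) as before, but work with \(L'=2L\). For the coordinates with \(h_i<L\), apply the relative-density box with corner chosen so that \(\tau_i\le h_i\) always holds. Concretely, using the box with corner \(a\) where \(a_i=h_i-L\) if \(h_i\ge L\) and \(a_i=0\) otherwise, we get \(\tau_i\le L\), which need not be \(\le h_i\).

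The alternative I will adopt is to set \(r=h-\tau+Lc\), where \(c\) is the indicator vector of the small coordinates. This breaks the translation structure, so instead I use the following cleaner route. Replace \(h\) by \(h'=h+\tau'\), where \(\tau'\in T_{\eta/4}\) lies in the box \(L\mathbf 1+\{0,\dots,L\}^d\), so that every coordinate of \(h'\) is at least \(L\). Then pick \(\tau\in T_{\eta/4}\) in the box \(h'-L\mathbf 1+\{0,\dots,L\}^d\), which is legitimate since \(h'-L\mathbf 1\in\Nzero^d\). Set \(r=h'-\tau\in\{0,\dots,L\}^d\).

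Now compare \(h\) and \(r\). We have \(h+\tau'-\tau=r\), and \(\tau,\tau'\) both shift \(f\) by less than \(\eta/4\) uniformly. Hence for all \(m\),
\[
|f(m+h)-f(m+r)|\le|f(m+h)-f(m+h+\tau')|+|f(m+h')-f(m+h'-\tau)|<\eta/2,
\]
where the second term uses \(\tau\in T_{\eta/4}\) evaluated at the point \(m+r\). Applying the same bound with \(m=0\) and combining via the triangle inequality yields \eqref{eq:finite-reduction-d}.

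For boundedness, apply the same construction with \(\eta=1\). For any \(h\) there is \(r\in\{0,\dots,L\}^d\) with \(|f(h)-f(r)|<1/2\), so
\[
\sup_{h}|f(h)|\le\max_{r\in\{0,\dots,L\}^d}|f(r)|+1<\infty.
\]
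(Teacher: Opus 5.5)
Your final argument is correct, but it takes a different route from the paper. The paper invokes the Bochner compactness criterion: the orbit $\{f(\cdot+h):h\in\Nzero^d\}$ is relatively compact in $\ell^\infty(\Nzero^d)$. This gives boundedness at once, and a finite $\eta/2$-net of translates $f(\cdot+r_j)$ supplies the representatives $r$, with $L=\max_{j,i}(r_j)_i$.

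You instead work directly from relative density of translation vectors. First you shift $h$ by a fixed $\tau'\in T_{\eta/4}\cap(L\mathbf 1+\{0,\dots,L\}^d)$, so that every coordinate of $h'=h+\tau'$ is at least $L$. Then you pull back by $\tau\in T_{\eta/4}$ chosen in $h'-L\mathbf 1+\{0,\dots,L\}^d$. This correctly handles the one-sidedness of $\Nzero^d$, where some coordinates of $h$ are small and others large, which is what breaks the naive $d=1$ argument. Both translation bounds are applied at points of $\Nzero^d$, namely $n=m+h$ and $n=m+r$. So the estimate $\sup_m|f(m+h)-f(m+r)|<\eta/2$ is legitimate, and boundedness follows as you say.

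What your route buys is self-containment. It avoids importing a compactness criterion usually stated for two-sided index sets such as $\R$ or $\mathbb Z$; in fact your two-step shift is essentially the proof of the needed direction of Bochner's criterion on the semigroup $\Nzero^d$. It also works verbatim for Banach-space-valued $f$. The paper's route is shorter, given the citation.

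When you write it up, delete the abandoned attempts (the $L'=2L$ and $r=h-\tau+Lc$ paragraphs). Also take $L$ to be the relative-density constant for $T_{\eta/4}$, not for $T_{\eta/2}$ as in your first paragraph.
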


\begin{proof}
By the Bochner compactness criterion~\cite[Chapter~1]{Corduneanu1989}, the translation orbit
\[
   \mathcal T(f)=\{m\mapsto f(m+h):h\in\Nzero^d\}
\]
is relatively compact in \(\ell^\infty(\Nzero^d)\).  In particular, \(f\)
is bounded.  Given \(\eta>0\), choose a finite \(\eta/2\)-net for
\(\mathcal T(f)\) consisting of translates \(m\mapsto f(m+r_j)\), where $r_j\in\N_0^d$, \(1\leq j\leq J\).  Let \(L=\max_{j,i}(r_j)_i\).  For each \(h\), choose
\(j\) such that
\[
   \sup_{m\in\Nzero^d}|f(m+h)-f(m+r_j)|<\frac{\eta}{2} .
\]
Taking \(m=0\) also gives \(|f(h)-f(r_j)|<\eta/2\).  Therefore,
\eqref{eq:finite-reduction-d} holds with \(r=r_j\) by the triangle inequality.
\end{proof}

\begin{lemma}\label{lem:sublattice}
If \(f:\Nzero^d\to\R\) is almost periodic, then, for every
\(r\in\Nzero^d\) and \(K\geq0\),
\[
   (T_{r,K}f)(m)=f(r+p^K m)-f(r),\qquad m\in\Nzero^d,
\]
is almost periodic.
\end{lemma}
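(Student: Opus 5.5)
We prove Lemma~\ref{lem:sublattice} directly from the definition of Bohr almost periodicity on \(\Nzero^d\). This avoids the multidimensional Bohr approximation theorem, though that route also works, as in Lemma~\ref{lem:arith1}. Since subtracting the constant \(f(r)\) does not change any difference \(g(m+h)-g(m)\), it suffices to show that \(g(m)=f(r+p^K m)\) is almost periodic. Write \(q=p^K\). The plan is to take translation numbers of \(f\) that are divisible by \(q\) in every coordinate and divide them by \(q\).

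\textbf{Step 1: sampling along \(q\Nzero^d\).} If \(\tau\in q\Nzero^d\) satisfies \(\sup_n|f(n+\tau)-f(n)|<\varepsilon\), then \(\tau/q\) is an \(\varepsilon\)-translation number of \(g\), because
\[
g(m+\tau/q)-g(m)=f(r+qm+\tau)-f(r+qm).
\]
So it suffices to show that, for every \(\varepsilon>0\), the set
\[
T_\varepsilon\cap q\Nzero^d
\]
is relatively dense, where \(T_\varepsilon\) is the set of \(\varepsilon\)-translation numbers of \(f\). Indeed, if every box of side \(L\) meets \(T_\varepsilon\cap q\Nzero^d\), then every box of side about \(L/q+1\) meets \(\{\tau/q:\tau\in T_\varepsilon\cap q\Nzero^d\}\).

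\textbf{Step 2: translation numbers divisible by \(q\).} This is the main obstacle. We use a pigeonhole argument on residues, as in the classical proof that almost periodic sequences have relatively dense translation numbers in any subgroup of finite index. Fix \(\varepsilon\) and let \(L\) be the relative-density constant for \(T_{\varepsilon/q^d}\).

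Consider any box \(a+\{0,\dots,M\}^d\) with \(M\) large. We want an element of \(T_\varepsilon\cap q\Nzero^d\) inside it. The approach is to sum \(q^d\) translation numbers whose residues mod \(q\) add up to zero.
\begin{enumerate}
\item Pick translation numbers \(\tau_1,\dots,\tau_{q^d}\in T_{\varepsilon/q^d}\) lying in disjoint consecutive boxes of side \(L\), and let \(S_j=\tau_1+\dots+\tau_j\).
\item Among the \(q^d+1\) partial sums \(S_0=0,S_1,\dots,S_{q^d}\), two have the same residue in \((\mathbb Z/q)^d\). Call them \(S_i\) and \(S_j\) with \(i<j\).
\item Then \(\sigma=\tau_{i+1}+\dots+\tau_j\) lies in \(q\Nzero^d\). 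By the triangle inequality \(\sigma\in T_\varepsilon\), since
\[
\sup_n|f(n+\sigma)-f(n)|<(j-i)\,\varepsilon/q^d\le\varepsilon.
\]
\end{enumerate}

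\textbf{Controlling the location of \(\sigma\).} The difficulty is that the pigeonhole step does not tell us where \(\sigma\) lies, so it need not be inside the prescribed box. We handle this as follows.
\begin{enumerate}
\item First obtain one element \(\sigma_0\in q\Nzero^d\cap T_{\varepsilon/2}\), by running the construction above with \(\varepsilon/2\) in place of \(\varepsilon\).
\item Observe that the set \(\{\sigma\in q\Nzero^d\cap T_{\varepsilon/2}\}\) produced by the construction has all coordinates of size \(O(q^dL)\), uniformly in where we start.
\item Given a target box \(a+\{0,\dots,M\}^d\), pick \(\rho\in T_{\varepsilon/4}\) close to \(a\). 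Then correct its residue by adding a bounded combination of translation numbers, chosen by the same pigeonhole argument applied to the residues of \(\rho+\) (partial sums).
\end{enumerate}
Concretely, we choose \(\tau_1,\dots,\tau_{q^d}\in T_{\varepsilon/(2q^d)}\) in boxes of side \(L\) placed consecutively just after \(a\). Among the partial sums \(a'+S_j\), with \(a'\in T_{\varepsilon/2}\) near \(a\) and \(S_0=0\), either some \(a'+S_j\equiv0\bmod q\) directly, or we pass to differences of partial sums.

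\textbf{Fallback for Step 2.} If this residue bookkeeping becomes messy, we instead use the Bochner compactness criterion as in Lemma~\ref{lem:finite_d}.
\begin{enumerate}
\item The map \(h\mapsto f(\cdot+h)\) composed with reduction mod \(q\), i.e.\ \(h\mapsto(f(\cdot+h),\,h\bmod q)\), has relatively compact range in \(\ell^\infty\times(\mathbb Z/q)^d\).
\item Hence \(F(h)=(f(\cdot+h),\,h\bmod q)\) is almost periodic as a function into a Banach space times a finite group.
\item Its \(\varepsilon\)-translation numbers with \(\varepsilon<1\), using the discrete metric on the group component, are exactly \(T_\varepsilon\cap q\Nzero^d\). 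Relative density follows from the standard equivalence between relative compactness of the orbit and Bohr almost periodicity, applied to \(F\).
\end{enumerate}
I expect to adopt this second route. It reduces Step 2 to checking that the orbit of \(F\) is totally bounded, which is immediate because the orbit of \(f\) is totally bounded and the group is finite.
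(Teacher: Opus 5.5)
Your Step~1 reduction is correct, and it is a genuinely different route from the paper's. The paper approximates \(f\) uniformly by trigonometric polynomials and observes that \(P(r+p^Km)-P(r)\) is again one in \(m\).

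\textbf{The gap.} It is in the route you commit to for Step~2. The equivalence between relative compactness of the translation orbit and Bohr almost periodicity is a theorem for functions on a group. On the half-lattice \(\Nzero^d\), with one-sided translates, only the direction used in Lemma~\ref{lem:finite_d} survives. For example, \(f(n)=1/(n+1)\) on \(\Nzero\) has forward orbit \(\{f(\cdot+h)\}\) converging to \(0\) in \(\ell^\infty\). Yet \(\sup_n|f(n+\tau)-f(n)|=\tau/(\tau+1)\ge 1/2\) for every \(\tau\ge1\), so it is not almost periodic.

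Concretely, run the net argument on \(F\): take an \(\varepsilon\)-net \(F(k_1),\dots,F(k_J)\) with \(\varepsilon<1\), and take \(a\) with all coordinates at least \(\kappa=\max_{j,i}(k_j)_i\). This only produces \(\tau=a-k_j\in q\Nzero^d\) with
\[
\sup_{n\in k_j+\Nzero^d}|f(n+\tau)-f(n)|<\varepsilon .
\]
That controls a shifted orthant, whereas Step~1 needs the supremum over all of \(\Nzero^d\). Separately, item~2 infers almost periodicity from compactness of the \emph{range} of \(F\) rather than of its orbit. That is harmless here only because \(\sup_h d(F(h+k),F(h+k'))=d(F(k),F(k'))\).

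\textbf{Repair via the compactness route.} Use that \(f\) itself is almost periodic. Take the net at scale \(\varepsilon/2\) and let \(\delta=\varepsilon/8\). For \(m\in\Nzero^d\), pick \(\sigma\in T_\delta\) with all coordinates at least \(\kappa\). Then
\[
|f(m+\tau)-f(m)|\le 2\delta+|f(m+\sigma+\tau)-f(m+\sigma)|,
\]
and \(m+\sigma\in k_j+\Nzero^d\), so the orthant bound propagates to all of \(\Nzero^d\).

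\textbf{Repair via the pigeonhole route.} Your first route also closes, without any location bookkeeping:
\begin{enumerate}
\item Set \(\delta=\varepsilon/q\).
\item For each residue class \(c\in(\mathbb Z/q)^d\) that meets \(T_\delta\), fix one representative \(\rho_c\in T_\delta\), and let \(C\) bound their coordinates.
\item Given \(\rho\in T_\delta\cap(a+\{0,\dots,L\}^d)\) with residue \(c\), set \(\rho+(q-1)\rho_c\).
\end{enumerate}
This vector is a sum of \(q\) elements of \(T_\delta\), hence lies in \(T_\varepsilon\). It is congruent to \(q\rho\equiv0\pmod q\). It lies in \(a+\{0,\dots,L+(q-1)C\}^d\), a box of fixed side, which gives relative density.

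Either repair gives a self-contained proof directly from the definition. It avoids the multidimensional Bohr approximation theorem on which the paper's short argument rests.
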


\begin{proof}
Use the finite-dimensional Banach space-valued Bohr approximation theorem:
\(f\) is a uniform limit on \(\Nzero^d\) of trigonometric polynomials
\[
   P(n)=\sum_{j=1}^J b_j e^{2\pi i\langle \lambda_j,n\rangle},\qquad b_j\in \mathbb{C}.
\]
For such a polynomial,
\[
   P(r+p^K m)-P(r)
   =
   \sum_{j=1}^J b_j e^{2\pi i\langle \lambda_j,r\rangle}
   \bigl(e^{2\pi i\langle p^K\lambda_j,m\rangle}-1\bigr),
\]
again a trigonometric polynomial in \(m\).  Passing to uniform limits gives
the claim.
\end{proof}

\begin{theorem}\label{thm:field}
Let \(d\geq 1\) and let \(X=(X_n)_{n\in\Nzero^d}\) be a real-valued
\(p\)-adic \(H\)-sssi random field.  Then
\[
\begin{aligned}
&\Pp(\{X\hbox{ is almost periodic on }\Nzero^d\}\Delta\{X\hbox{ is }p\hbox{-adically continuous on }\Nzero^d\})=0.
\end{aligned}
\]
\end{theorem}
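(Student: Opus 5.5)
The plan is to repeat the proof of Theorem~\ref{thm:main1} on the index set \(\Nzero^d\), using Lemma~\ref{lem:finite_d} in place of Lemma~\ref{lem:finite1} and Lemma~\ref{lem:sublattice} in place of Lemma~\ref{lem:arith1}.

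\textbf{Easy direction.} Suppose \(\sup_{n,m}|X_{n+p^Km}-X_n|<\varepsilon\). Then every \(h\in p^K\Nzero^d\) is an \(\varepsilon\)-translation number. Every box \(a+\{0,\dots,p^K\}^d\) contains a point of \(p^K\Nzero^d\), so this set is relatively dense.

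\textbf{Converse: setup.} Let \(A\) be the almost-periodic path event, and assume \(\Pp(A)>0\). Set \(\mathbb Q=\Pp(\cdot\mid A)\) and
\[
C_K(\varepsilon)=\Bigl\{\sup_{n,m\in\Nzero^d}|X_{n+p^Km}-X_n|<\varepsilon\Bigr\}.
\]
These events increase in \(K\). As in \eqref{eq:C-from-CK-1d}, it suffices to show \(\mathbb Q(C_K(\varepsilon))\to1\) as \(K\to\infty\) for each fixed \(\varepsilon>0\).

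\textbf{Deterministic reduction to finitely many base points.} Let \(D_L(\eta)\) be the path event in Lemma~\ref{lem:finite_d}, with \(\eta=\varepsilon/2\) and \(r\) ranging over \(\{0,\dots,L\}^d\). By that lemma, \(\mathbb Q(\bigcup_L D_L)=1\), and the events \(D_L\) increase in \(L\). Given \(\gamma>0\), fix \(L_0\) with \(\mathbb Q(D_{L_0})>1-\gamma\). Take \(m=p^Ku\) in \eqref{eq:finite-reduction-d} and apply the triangle inequality. This gives
\[
C_K(\varepsilon)^c\subseteq D_{L_0}^c\cup\bigcup_{r\in\{0,\dots,L_0\}^d}\Bigl\{\sup_{u\in\Nzero^d}|X_{r+p^Ku}-X_r|\ge\varepsilon/2\Bigr\}.
\]
The union on the right has \((L_0+1)^d\) terms.

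\textbf{Probabilistic step: the distributional identity.} Let \(\Phi_{r,K}X=(X_{r+p^Ku}-X_r)_{u\in\Nzero^d}\). First, the increment identity with \(\ell=r\) gives \((X_{r+n}-X_r)_n\dd(X_n-X_0)_n\). Restricting to \(n=p^Ku\) preserves this equality in law, since it is a measurable map of the whole process. Next, \(X_0=0\) a.s.; in \(d\) dimensions this follows by the same argument as \cite[Proposition 3.11(i)]{ShenZhang2021}, since scaling by \(a=p\) gives \(X_0\dd p^{-H}X_0\). Finally, scaling with \(a=p^K\) turns \((X_{p^Ku})_u\) into \(p^{-KH}(X_u)_u\) in law. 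Together, \(\Phi_{r,K}X\dd p^{-KH}X\).

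\textbf{Probabilistic step: conditioning and tail bound.} Lemma~\ref{lem:sublattice} gives \(A\subseteq\Phi_{r,K}^{-1}A\). The event \(A\) is invariant under the nonzero scaling \(S_c x=p^{-KH}x\). Lemma~\ref{lem:cond} therefore transfers the identity to \(\mathbb Q\). With \(M=\sup_n|X_n|\), which is finite \(\mathbb Q\)-a.s. by the boundedness part of Lemma~\ref{lem:finite_d}, we get
\[
\mathbb Q\Bigl(\sup_u|X_{r+p^Ku}-X_r|\ge\varepsilon/2\Bigr)=\mathbb Q\bigl(M\ge\varepsilon p^{KH}/2\bigr).
\]
The union bound then yields
\[
\mathbb Q(C_K(\varepsilon)^c)\le\gamma+(L_0+1)^d\,\mathbb Q\bigl(M\ge\varepsilon p^{KH}/2\bigr).
\]
The second term tends to \(0\) as \(K\to\infty\), and then letting \(\gamma\downarrow0\) finishes the proof.

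\textbf{Main obstacle.} The only points needing care are the two multidimensional inputs. One is that \(X_0=0\) a.s. for fields, handled by the scaling argument above. The other is that Lemmas~\ref{lem:finite_d} and \ref{lem:sublattice} describe measurable path events with the invariance properties that Lemma~\ref{lem:cond} requires. Both are routine, so I expect the proof to be essentially a transcription of Theorem~\ref{thm:main1}.
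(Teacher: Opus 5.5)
Your proposal is correct and follows the paper's proof essentially verbatim: it transcribes the proof of Theorem~\ref{thm:main1}, with Lemma~\ref{lem:finite_d} and Lemma~\ref{lem:sublattice} replacing their one-dimensional counterparts, the maps \(\Phi_{r,K}\) on \(\Nzero^d\), and a finite union over \(r\in\{0,\dots,L_0\}^d\). The paper leaves two points implicit, and you check both explicitly: that \(X_0=0\) a.s.\ for fields (via \(X_0\dd p^{-H}X_0\)), and the identity \(\Phi_{r,K}X\dd p^{-KH}X\).
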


\begin{proof}
The deterministic implication from \(p\)-adic continuity to almost
periodicity is the same as in Theorem~\ref{thm:main1}: if
\eqref{eq:padic-cont-d} holds, then every
\(h\in(p^K\Nzero)^d\) is an \(\varepsilon\)-translation vector, and
\((p^K\Nzero)^d\) is relatively dense in \(\Nzero^d\).

For the converse, we repeat the proof of Theorem~\ref{thm:main1}, with
Lemma~\ref{lem:finite_d} replacing Lemma~\ref{lem:finite1},
Lemma~\ref{lem:sublattice} replacing Lemma~\ref{lem:arith1}, and the maps
\[
   \Phi_{r,K}X=(X_{r+p^K m}-X_r)_{m\in\Nzero^d}
\]
replacing the one-dimensional version.  The finite union over
\(r\in\{0,\dots,L_0\}\) is replaced by the finite union over
\(r\in\{0,\dots,L_0\}^d\).  This gives the other direction of the theorem.
\end{proof}

\end{document}